\newtheorem{theorem}{Theorem}[section]
\newtheorem{lemma}[theorem]{Lemma}
\newtheorem{proposition}[theorem]{Proposition}
\newtheorem{fact}[theorem]{Fact}
\theoremstyle{definition}
\theoremstyle{remark}
\newtheorem{remark}[theorem]{Remark}
\numberwithin{equation}{section}
\renewcommand{\dim}{\mathrm{dim}}
\newcommand{\dist}{\mathrm{dist}}
\newcommand{\conv}{\mathrm{conv}}
\newcommand{\R}{\mathbb{R}}
\newcommand{\N}{\mathbb{N}}
\newcommand{\C}{\mathbb{C}}
\newcommand{\K}{\mathbb{K}}
\newcommand{\X}{\mathrm{X}}
\newcommand{\Y}{\mathrm{Y}}
\newcommand{\B}{\mathbf{B}}
\newcommand{\I}{\mathbf{I}}
\renewcommand{\S}{\mathbf{S}}
\renewcommand{\mod}{/}
\renewcommand{\ae}{\stackrel{m}{\sim}}
\newcommand{\U}{\mathbf{U}}
\begin{document}

\title{Uniformly Convex-Transitive Function Spaces}

\author[F. Rambla]{Fernando Rambla-Barreno}
\address{Fernando Rambla\\ Universidad de C\'{a}diz, Departamento de Matem\'{a}ticas, 11510, Puerto Real, Spain}
\email{fernando.rambla@uca.es}

\author[J. Talponen]{Jarno Talponen}
\address{Jarno Talponen\\ University of Helsinki, Department of Mathematics and Statistics, Box 68, FI-00014 University of Helsinki,
Finland}
\email{talponen@cc.helsinki.fi}

\subjclass[2000]{Primary 46B04, 46B20; Secondary 46B25}
\date{\today}

\begin{abstract}
We introduce a property of Banach spaces, called uniform convex-transitivity, which falls between almost transitivity
and convex-transitivity. We will provide examples of uniformly convex-transitive spaces. This property behaves nicely
in connection with some vector-valued function spaces. As a consequence, we obtain some new examples of convex-transitive Banach spaces.
\end{abstract}

\maketitle

\section{Introduction}

In this paper we study the symmetries of some well-known, in fact, almost classical Banach spaces.
We denote the closed unit ball of a Banach space $\X$ by $\B_{\X}$ and the unit sphere of $\X$ by
$\S_{\X}$. A Banach space $\X$ is called \emph{transitive} if for each $x\in \S_{\X}$ the orbit
$\mathcal{G}_{\X}(x)\stackrel{\cdot}{=}\{T(x)|\ T\colon \X\rightarrow \X\ \mathrm{is\ an\
isometric\ automorphism}\}=\S_{\X}$. If $\overline{\mathcal{G}_{\X}(x)}=\S_{\X}$ (resp.
$\overline{\conv}(\mathcal{G}_{\X}(x))=\B_{\X}$) for all $x\in\S_{\X}$, then $\X$ is called
\emph{almost transitive} (resp. \emph{convex-transitive}). These concepts are motivated by the
\emph{Banach-Mazur rotation problem} appearing in \cite[p.242]{Ba}, which remains unsolved. We
refer to \cite{BR2} and \cite{Ca0} for a survey and discussion on the matter.

The known concrete examples of convex-transitive spaces are scarce, and the ultimate aim of this
paper is to provide more examples by establishing the convex-transitivity of some vector-valued
function spaces and other natural spaces. It was first reported by Pelczynski and Rolewicz
\cite{PR} in 1962 that the space $L^p$ is almost transitive for $p\in [1,\infty)$ and
convex-transitive for $p=\infty$ (see also \cite{Rol}). Later, Wood \cite{Wo} characterized the
spaces $C_0^{\R}(L)$ whose norm is convex-transitive (see Preliminaries). Greim, Jamison and
Kaminska \cite{GJK} proved that if $\X$ is almost transitive and $1\leq p<\infty$, then the
Lebesgue-Bochner space $L^p(\X)$ is also almost transitive. Recently, an analogous study of the
spaces $C_0(L,\X)$ was done by Aizpuru and Rambla \cite{AR}, and some related spaces were studied
by Talponen \cite{conv}. For some other relevant contemporary results, see \cite{Ca?},
\cite{Kawamura} and \cite{Rambla}.

We will extend these investigations into the vector-valued convex-transitive setting, which
differs considerably in many respects from the scalar-valued almost transitive one. For this
purpose we will introduce a new concept which is (formally) stronger than convex-transitivity and
weaker than almost transitivity, called \emph{uniform convex-transitivity}. With the aid of this
class of Banach spaces we produce new natural examples of convex-transitive vector-valued function
spaces. The main results of this paper are the following:
\begin{itemize}
\item Characterization of locally compact Hausdorff spaces $L$ such that $C_0^{\R}(L)$ is uniformly convex-transitive.
\item If $\X$ is a uniformly convex-transitive Banach space, then so is $L_{\K}^{\infty}(\X)$.
\item If $\X$ and $C_0^{\R}(L)$ are uniformly convex-transitive, then so is $C_0^{\K}(L,\X)$.
\end{itemize}

\subsection*{Preliminaries}
The scalar field of a Banach space $\X$ is denoted by $\K$ and whenever there are several Banach
spaces under discussion, then $\K$ is the scalar field of the space denoted by $\X$. The open unit
ball of $\X$ is denoted by $\U_{\X}$. The group of rotations $\mathcal{G}_{\X}$ of $\X$ consists
of isometric automorphisms $T\colon\X\rightarrow\X$, the group operation being the composition of
the maps and the neutral element being the identity map $\I\colon \X\rightarrow\X$. We will always
consider $\mathcal{G}_{\X}$ equipped with the strong operator topology (SOT). An element $x\in
\S_{\X}$ is called a big point if $\overline{\conv}{\mathcal{G}(x)}=\B_{\X}$. Thus $\X$ is
convex-transitive if and only if each $x\in\S_{\X}$ is a big point.

Recall that a topological space is totally disconnected if each connected component of the space
is a singleton. In what follows $L$ is a locally compact Hausdorff space and $K$ is a compact
Hausdorff space, unless otherwise stated. In \cite{Wo} Wood characterized convex-transitive
$C_0^{\R}(L)$ spaces. Namely, $C_0^{\R}(L)$ is convex-transitive if and only if $L$ is totally
disconnected and for every regular probability measure $\mu$ on $L$ and $t\in L$ there exists a
net $\{\gamma_{\alpha}\}_{\alpha}$ of homeomorphisms on $L$ such that the net $\{\mu\circ
\gamma_{\alpha}\}_{\alpha}$ is $\omega^{\ast}$-convergent to the Dirac measure $\delta_{t}$. The
above mapping $\mu\circ \gamma_{\alpha}$ is given by $\mu\circ
\gamma_{\alpha}(A)=\mu(\gamma_{\alpha}(A))$ for Borel sets $A\subset L$.

We refer to \cite{Lac} for background information on measure algebras and isometries of $L^p$-spaces and to
\cite{HHZ} for a suitable source to Banach spaces in general. In what follows $\Sigma$ is the
completed $\sigma$-algebra of Lebesgue measurable sets on $[0,1]$ and we denote by $m\colon
\Sigma\rightarrow \R$ the Lebesgue measure. Define an equivalence relation $\ae$ on $\Sigma$ by
setting $A\ae B$ if $m((A\cup B)\setminus (A\cap B))=0$.

Recall that a rotation $R$ on the space $C_0^{\K}(L,\X)$ is said to be of the \emph{Banach-Stone
type}, if $R$ can be written as
\[R(f)(t)=\sigma(t)(f\circ\phi(t)),\quad f\in C_0^{\K}(L,\X),\]
where $\phi\colon L\rightarrow L$ is a homeomorphism and $\sigma\colon L\rightarrow
\mathcal{G}_{\X}$ is a continuous map. A Banach space $\Y$ is said to be contained \emph{almost
isometrically} in a Banach space $\X$ if for each $\varepsilon>0$ there is a linear map
$\psi\colon \Y\rightarrow \X$ such that
\[||y||_{\Y}\leq ||\psi(y)||_{\X}\leq (1+\varepsilon)||y||_{\Y}\quad \mathrm{for}\ y\in \Y.\]

\section{Uniform convex-transitivity}
Provided that the space $\X$ under discussion is understood, we denote
\[C_{n}(x)=\left\{\sum_{i=1}^{n}a_{i}T_{i}(x)|\ T_{1},\ldots,T_{n}\in \mathcal{G}_{\X},\ a_{1},\ldots,a_{n}\in [0,1],\ \sum_{i=1}^{n}a_{i}=1\right\}\]for $n\in \N$ and $x\in \S_{\X}$.
We call a Banach space $\X$ \emph{uniformly convex-transitive} if for each $\varepsilon>0$ there
exists $n\in \N$ satisfying the following condition: For all $x\in \S_{\X}$ and $y\in \B_{\X}$ it
holds that $\dist(y,C_{n}(x))\leq \varepsilon$, that is
\[\lim_{n\rightarrow \infty}\sup_{x \in \S_{\X}, y \in \B_{\X}}\dist(y,C_{n}(x))=0.\]
We denote by $K_{\varepsilon}$ the least integer $n$, which satisfies the above inequality involving $\varepsilon$
and such $K_{\varepsilon}$ is called \emph{the constant of uniform convex transitivity of $\X$ associated to $\varepsilon$.}
We call $x\in \S_{\X}$ a \emph{uniformly big point} if
\[\lim_{n\rightarrow \infty}\sup_{y\in \B_{\X}}\dist(y,C_{n}(x))=0.\]

Clearly almost transitive spaces are uniformly convex-transitive, and uniformly convex-transitive
spaces are convex-transitive. It is well-known that $C^{\C}(S^{1})$ is a convex-transitive,
non-almost transitive space, and it is easy to see (see e.g. the subsequent Theorem \ref{thm:
C0char}) that it is even uniformly convex-transitive. Unfortunately, we have not been able so far
to find an example of a convex-transitive space which is not uniformly convex-transitive. However,
we suspect that such examples exist and we note that the absence of such a complicated space would
make some proofs regarding convex-transitive spaces much more simple. Observe that the canonical
unit vectors $e_{k}\in \ell^{1}$ are far from being uniformly big points:
\[\lim_{n\rightarrow \infty}\sup_{y\in \B_{\ell^{1}}}\dist(y,C_{n}(e_{k}))=1,\]
even though they are big points, i.e. $\overline{\conv}(\mathcal{G}_{\ell^{1}}(e_{k}))=\B_{\ell^{1}}$ for $k\in \N$.
In any case, we will provide examples of uniformly convex-transitive spaces, most of which are not previously known
to be even convex-transitive.

We note that if $\X$ is convex-transitive and there exists a uniformly big point $x\in \S_{\X}$, then each $y\in \S_{\X}$
is a uniformly big point. This does not mean, a priori, that $\X$ should be uniformly convex-transitive.
Next we give an equivalent condition to uniform convex transitivity, which is more applicable in calculations
than the condition introduced above.
\begin{proposition}
Let $\X$ be a Banach space. The following condition of $\X$ is equivalent to $\X$ being uniformly
convex-transitive: For each $\varepsilon>0$ there is $N_{\varepsilon}\in \N$ such that for each
$x\in \S_{\X}$ and $y\in \B_{\X}$ there are $T_{1},\ldots, T_{N_{\varepsilon}}\in
\mathcal{G}_{\X}$ such that
\begin{equation}\label{eq: bullet}
\begin{array}{l}
\left|\left|y-\frac{1}{N_{\varepsilon}}\sum_{i=1}^{N_{\varepsilon}}T_{i}(x)\right|\right|\leq \varepsilon.
\end{array}
\end{equation}
\end{proposition}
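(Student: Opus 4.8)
The plan is to prove the two implications separately, with essentially all the content lying in deriving \eqref{eq: bullet} from the definition. The implication from \eqref{eq: bullet} to uniform convex-transitivity is immediate: given $\varepsilon>0$, let $N_{\varepsilon}$ be as in the stated condition. For any $x\in\S_{\X}$ and $y\in\B_{\X}$ the point $\frac{1}{N_{\varepsilon}}\sum_{i=1}^{N_{\varepsilon}}T_{i}(x)$ belongs to $C_{N_{\varepsilon}}(x)$, since the weights $1/N_{\varepsilon}$ are nonnegative and sum to $1$. Hence $\dist(y,C_{N_{\varepsilon}}(x))\leq\varepsilon$, and taking $n=N_{\varepsilon}$ witnesses uniform convex-transitivity.

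For the converse I would pass from arbitrary convex combinations to equal-weight averages by a common-denominator rounding argument. Fix $\varepsilon>0$ and apply uniform convex-transitivity with tolerance $\varepsilon/2$ to obtain an integer $n=n(\varepsilon/2)$ with $\dist(y,C_{n}(x))\leq\varepsilon/2$ for all admissible $x,y$. Since the infimum need not be attained, I choose $z=\sum_{i=1}^{n}a_{i}T_{i}(x)\in C_{n}(x)$ with $\|y-z\|\leq 3\varepsilon/4$. The decisive point, to be emphasized, is that $n$ depends only on $\varepsilon$ and not on the pair $(x,y)$, so every quantity built from $n$ below will be uniform in $(x,y)$ as well.

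Next I fix an integer $N\geq 4n/\varepsilon$ (this will serve as $N_{\varepsilon}$) and replace each weight $a_{i}$ by an integer multiple of $1/N$. Writing $b_{i}=Na_{i}$, so that $\sum_{i}b_{i}=N$, a largest-remainder rounding yields nonnegative integers $k_{1},\ldots,k_{n}$ with $\sum_{i}k_{i}=N$ and $\lvert b_{i}-k_{i}\rvert\leq 1$ for each $i$; hence $\sum_{i=1}^{n}\lvert a_{i}-k_{i}/N\rvert\leq n/N\leq\varepsilon/4$. As each $T_{i}$ is an isometry and $x\in\S_{\X}$, we have $\|T_{i}(x)\|=1$, whence $\bigl\|z-\frac{1}{N}\sum_{i=1}^{n}k_{i}T_{i}(x)\bigr\|\leq\sum_{i}\lvert a_{i}-k_{i}/N\rvert\leq\varepsilon/4$. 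Finally I rewrite $\frac{1}{N}\sum_{i=1}^{n}k_{i}T_{i}(x)$ as an equal-weight average $\frac{1}{N}\sum_{j=1}^{N}S_{j}(x)$, where the list $(S_{1},\ldots,S_{N})$ repeats each $T_{i}$ exactly $k_{i}$ times; this is legitimate precisely because $\sum_{i}k_{i}=N$. The triangle inequality then gives $\bigl\|y-\frac{1}{N}\sum_{j=1}^{N}S_{j}(x)\bigr\|\leq 3\varepsilon/4+\varepsilon/4=\varepsilon$, which is \eqref{eq: bullet} with $N_{\varepsilon}=N$.

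I expect the only genuine obstacle to be the bookkeeping that makes the number of averaged rotations uniform in $(x,y)$. This rests on the observation that in the definition of uniform convex-transitivity the integer $n$ is already uniform in $(x,y)$; consequently the denominator $N$, which is determined solely by $n$ and $\varepsilon$, is uniform as well. The rounding step then merely trades arbitrary convex weights for equal weights, at the cost of enlarging the number of terms from $n$ to $N$, while keeping that number independent of the data.
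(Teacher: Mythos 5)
Your proof is correct and follows essentially the same route as the paper's: both pass from an arbitrary convex combination (with a number of terms depending only on $\varepsilon$) to an equal-weight average by rounding the weights to integer multiples of $1/N$ with $N$ determined solely by $\varepsilon$, then repeating each rotation according to its rounded weight. Your largest-remainder rounding with $\sum_i k_i = N$ is in fact a slightly tidier piece of bookkeeping than the paper's floor/ceiling block assignment, which can leave a few indices unassigned, but the underlying idea is identical.
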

\begin{proof}
It is clear that \eqref{eq: bullet} implies uniform convex transitivity, even for the value
$K_{\varepsilon}=N_{\varepsilon}$ for each $\varepsilon>0$. Towards the other direction, let $\X$
be a uniformly convex-transitive Banach space, $\varepsilon>0$ and $x\in \S_{\X},\ y\in \B_{\X}$.
Let $K$ be the constant of uniform convex-transitivity of $\X$ associated to
$\frac{\varepsilon}{4}$. Then there are $a_{1},\ldots ,a_{K}\in [0,1],\ \sum_{i}a_{i}=1$ and
$T_{1},\ldots,T_{K}\in \mathcal{G}_{\X}$ such that
\begin{equation*}
\begin{array}{l}
\left|\left|y-\sum_{i=1}^{K}a_{i}T_{i}(x)\right|\right|\leq \frac{\varepsilon}{4}.
\end{array}
\end{equation*}
Put $m=\lceil\frac{4K}{\varepsilon}\rceil\in \N$, so that $K\cdot\frac{1}{m}\leq
\frac{\varepsilon}{4}$. Next we define an $m$-uple $(S_{1},\ldots,S_{m})\subset \mathcal{G}_{\X}$
as follows: For each $j\in \{1,\ldots,m\}$ and $i\in \{1,\ldots,K\}$ we put $S_{j}=T_{i}$ if
$\lceil m\sum_{n<i}a_{n}\rceil < j\leq \lfloor m\sum_{n\leq i}a_{n}\rfloor$. (Here
$\sum_{\emptyset}a_{n}=0$.) By applying the triangle inequality several times, we obtain that
\begin{equation*}
\begin{array}{l}
\left|\left|y-\frac{1}{m}\sum_{j=1}^{m}S_{j}(x)\right|\right|\leq \varepsilon.
\end{array}
\end{equation*}
Hence it suffices to put $N_{\varepsilon}=m=\lceil\frac{4K}{\varepsilon}\rceil$, where $K$ depends
only on the Banach space $\X$ and the value of $\varepsilon$.
\end{proof}
In what follows, we will apply the constant $N_{\varepsilon}$ freely without explicit reference to the
previous proposition, and if there is no danger of confusion, also without mentioning explicitly
$\X$ and $\varepsilon$, either.

The following condition on the locally compact space $L$ turns out to be closely related to the
uniform convex-transitivity of $C_0^{\K}(L)$:
\begin{enumerate}
\item[$(\ast)$]{For each $\varepsilon>0$ there is $M_{\varepsilon}\in \N$ such that for every non-empty open subset $U\subset L$
and compact $K\subset L$ there are homeomorphisms $\phi_{1},\ldots,\phi_{M_{\varepsilon}}\colon
L\rightarrow L$ with
\[\frac{1}{M_{\varepsilon}}\sum_{i=1}^{M_{\varepsilon}}\chi_{\phi_{i}^{-1}(U)}(t)\geq 1-\varepsilon\quad \mathrm{for}\ t\in K.\]}
\end{enumerate}
This condition should be compared with the conditions found by Cabello (see \cite[p.110-113]{Ca?},
especially condition (g)), which characterize the convex transitivity of $C_0(L)$. Next we will
give this characterization the uniformly convex-transitive counterpart. If $L$ is a locally compact Hausdorff space,
by $\alpha L$ we denote its one-point compactification and if
$L$ is noncompact, we denote such point by $\infty$. Prior to the theorem we need the following
two lemmas.


\begin{lemma}\label{lm: from Ra}(\cite[Thm. 3.1]{Rambla})
Let $T$ be a normal topological space with $\dim\ T \leq 1$. If $F\subseteq T$ is a closed subset and $f: F \to S_{\C}$ is a continuous
map, then $f$ admits a continuous extension $g: T \to S_{\C}$.
\end{lemma}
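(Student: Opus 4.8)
The plan is to read this as an obstruction-theoretic extension problem and to exploit that the target $S_{\C}=\{z\in\C:\abs{z}=1\}$ is the circle $S^{1}$, an Eilenberg--MacLane space $K(\mathbb{Z},1)$. Consequently circle-valued maps are governed by first \v{C}ech cohomology with integer coefficients, and the hypothesis $\dim T\le 1$ will be used exactly once, to kill the single obstruction to extending.

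First I would invoke the classical Eilenberg--Bruschlinsky description: for a normal space $X$ the homotopy classes $[X,S_{\C}]$ are in natural bijection with $\check{H}^{1}(X;\mathbb{Z})$, the group operation corresponding to pointwise multiplication of maps and restriction to a closed subset corresponding to the cohomology restriction. Feeding the pair $(T,F)$ into the long exact sequence $\check{H}^{1}(T;\mathbb{Z})\to\check{H}^{1}(F;\mathbb{Z})\xrightarrow{\delta}\check{H}^{2}(T,F;\mathbb{Z})$, the class $[f]$ of our given map extends to a class over $T$ precisely when $\delta[f]=0$. Since $\dim T\le 1$, dimension theory yields $\check{H}^{k}(T,F;\mathbb{Z})=0$ for every $k\ge 2$; hence $\delta[f]=0$ and there is $c\in\check{H}^{1}(T;\mathbb{Z})$ restricting to $[f]$. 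Realizing $c$ by a map produces a continuous $h\colon T\to S_{\C}$ whose restriction $h|_{F}$ is homotopic to $f$.

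The final step upgrades this homotopy to honest equality using the group structure of $S_{\C}$, and this is the clean part I would lean on. The quotient $u\colon F\to S_{\C}$, $u(x)=f(x)h(x)^{-1}$, represents $[f]-[h|_{F}]=0$, so $u$ is null-homotopic and therefore lifts through the universal cover $e\colon\R\to S_{\C}$, $e(t)=e^{2\pi it}$, to a continuous $\lambda\colon F\to\R$ with $u=e\circ\lambda$. By the Tietze extension theorem, valid since $T$ is normal and $F$ is closed, I extend $\lambda$ to $\Lambda\colon T\to\R$ and set $g(x)=h(x)\,e^{2\pi i\Lambda(x)}$. Then $g$ is a continuous map into $S_{\C}$, and for $x\in F$ one has $g(x)=h(x)u(x)=f(x)$, so $g$ is the desired extension.

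I expect the main obstacle to be the one place where the dimension hypothesis really does work: guaranteeing $\check{H}^{2}(T,F;\mathbb{Z})=0$ together with exactness of the sequence of the closed pair $(T,F)$ for a merely normal (not necessarily paracompact) $T$ of covering dimension at most one. I would resolve this through the inequality $\dim_{\mathbb{Z}}T\le\dim T$ relating cohomological and covering dimension for normal spaces. The representability and the Tietze correction are then routine; indeed the whole statement is nothing but the $n=1$ instance of the classical characterization of covering dimension through extendability of maps into $S^{n}$, which is the perspective I would ultimately present.
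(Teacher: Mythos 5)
Two remarks before the main point. First, the paper itself contains no proof of this lemma: it is imported wholesale as a citation to \cite[Thm. 3.1]{Rambla}, and the statement is precisely the $n=1$ case of the classical Alexandroff--Hemmingsen characterization of covering dimension for normal spaces (Engelking, \emph{Dimension Theory}, Thm.~3.2.10). So your closing observation is exactly right, and presenting the lemma as that classical theorem (with a citation) is in effect what the authors do. Second, your final ``Tietze correction'' step is correct and self-contained: covering projections are Hurewicz fibrations, so the null-homotopic map $u=f\bar h|_{F}$ lifts to $\lambda\colon F\to\R$, Tietze extends $\lambda$ to $\Lambda$, and $g=h\,e^{2\pi i\Lambda}$ restricts to $f$ on $F$.

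The genuine gap is in the obstruction-theoretic core, and it sits exactly in the case the lemma is stated for: normal but not paracompact $T$. The Bruschlinsky--Huber classification $[X,S_{\C}]\cong\check{H}^{1}(X;\Z)$ is a theorem about \emph{paracompact} spaces. For a merely normal $T$, the only \v{C}ech theory in which your other two ingredients are available --- exactness of the sequence of the closed pair and the vanishing $\check{H}^{2}(T,F;\Z)=0$ from $\dim T\le 1$ (equivalently, the inequality $\dim_{\Z}\le\dim$ you invoke, since covering dimension is defined through \emph{finite} covers) --- is the \v{C}ech theory based on finite open covers, which coincides with $\check{H}^{*}(\beta T;\Z)$. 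But that theory is not homotopy invariant, hence cannot classify $[T,S_{\C}]$: the map $t\mapsto e^{2\pi it}$ on $\R$ is null-homotopic, yet its canonical extension $g\colon\beta\R\to S_{\C}$ is not (a lift $\mu\colon\beta\R\to\R$ of $g$ would be bounded by compactness, while on $\R$ it would have to equal $t+k$), so $g$ represents a nonzero class in $\check{H}^{1}(\beta\R;\Z)$, i.e.\ in the finite-cover $\check{H}^{1}(\R;\Z)$, even though $[\R,S_{\C}]=0$. Thus with finite covers your step ``realize $c$ by a map $h$ and conclude $h|_{F}\simeq f$'' is unjustified, while with arbitrary covers the exactness and the degree-two vanishing are unjustified for normal non-paracompact $T$; the inequality $\dim_{\Z}T\le\dim T$ repairs only the second pair of issues, never the classification. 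The repair that preserves your strategy is to run the entire cohomological argument on the compact pair $(\beta T,\mathrm{cl}_{\beta T}F)$: closed subsets of normal spaces are $C^{*}$-embedded, so $\mathrm{cl}_{\beta T}F=\beta F$, and $\dim\beta T=\dim T\le 1$; on this pair Huber's theorem, exactness and the vanishing all hold, giving $H\colon\beta T\to S_{\C}$ with $H|_{\beta F}\simeq\beta f$. Restricting $H$ to $T$ and the homotopy to $F\times[0,1]$ hands you exactly the input your (correct) Tietze step needs.
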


\begin{lemma}\label{lm: technical uct}
Let $L$ be a locally compact, Hausdorff, $0$-dimensional space. Then for every $g \in
\B_{C_0^{\R}(L)}$ and $k\in \N$ there exist disjoint clopen sets $C_1, C_2, \dots, C_{2k-1}$ such
that the function $h \in \B_{C_0^{\R}(L)}$ defined by $h=\sum_{i=1}^{2k-1}\frac{i-k}{k}\chi_{C_i}$
satisfies $\|h-g\| \leq \frac{3}{2k}$.
\end{lemma}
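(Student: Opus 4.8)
The plan is to use the $0$-dimensionality of $L$ to replace the (generally non-clopen) level sets of $g$ by genuinely clopen sets, and to split the permitted error $\frac{3}{2k}$ into a rounding part of size $\frac{1}{k}$ and an oscillation part of size $\frac{1}{2k}$. It is convenient to reindex: writing $j=i-k$, the prescribed heights $\frac{i-k}{k}$ ($1\le i\le 2k-1$) are exactly the grid values $v_j=\frac{j}{k}$ with $j\in\{-(k-1),\dots,k-1\}$, i.e. the multiples of $\frac{1}{k}$ lying strictly between $-1$ and $1$. The nearest-point map of $[-1,1]$ onto this grid displaces each point by at most $\frac{1}{k}$, the extreme case occurring at $\pm 1$, whose nearest grid value is $\pm(1-\frac1k)$; in the interior the displacement is at most $\frac{1}{2k}$. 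The structural fact I rely on is that a locally compact Hausdorff $0$-dimensional space has a basis of \emph{compact} clopen sets: given $t$ and open $U\ni t$, local compactness supplies a compact neighbourhood $N$ of $t$, and a clopen basic set $V$ with $t\in V\subseteq \mathrm{int}(N)\cap U$ is then closed inside the compact set $N$, hence itself compact.

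Now I construct the sets. Since $g\in C_0^{\R}(L)$, the set $F=\{t:|g(t)|\ge \frac{1}{2k}\}$ is compact. For each $t\in F$ continuity provides a compact clopen neighbourhood $W_t$ of $t$ on which $g$ oscillates by less than $\frac{1}{2k}$; by compactness finitely many $W_{t_1},\dots,W_{t_m}$ cover $F$, and $W=\bigcup_l W_{t_l}$ is compact and clopen. Disjointifying by $W_l'=W_{t_l}\setminus\bigcup_{r<l}W_{t_r}$ (a difference of clopen sets, hence clopen) yields a finite clopen partition of $W$ with $g$ oscillating by less than $\frac{1}{2k}$ on each $W_l'$. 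On each nonempty $W_l'$ fix a value $c_l=g(s_l)$ and let $v_{j(l)}$ be the nearest grid value to $c_l$; then set $C_i=\bigcup\{W_l': j(l)+k=i\}$ for $1\le i\le 2k-1$. These are clopen and pairwise disjoint, and the resulting $h=\sum_{i}\frac{i-k}{k}\chi_{C_i}$ equals $v_{j(l)}$ on $W_l'$ and $0$ off $W$.

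It remains to estimate. On $W_l'$ we have $|g(t)-v_{j(l)}|\le |g(t)-c_l|+|c_l-v_{j(l)}|<\frac{1}{2k}+\frac{1}{k}=\frac{3}{2k}$, while off $W$ we have $|g(t)-0|<\frac{1}{2k}$, so $\|h-g\|\le\frac{3}{2k}$. Since $\{h\ne 0\}\subseteq W$ is compact and each $C_i$ is clopen, $h$ is continuous with compact support and $\|h\|\le 1-\frac1k\le 1$, so $h\in\B_{C_0^{\R}(L)}$. The main obstacle is the very first point: a continuous function only furnishes \emph{open} and \emph{closed} level sets, so the whole argument hinges on upgrading these to compact clopen sets, which is exactly where both $0$-dimensionality (for clopenness) and local compactness (for compactness, and hence for $h\in C_0$) are essential; the secondary care needed is the correct accounting of the $\frac{1}{k}$ rounding loss, which is forced by the grid omitting the endpoints $\pm 1$.
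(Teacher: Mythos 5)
Your proof is correct, and it is a genuine (if modest) variation on the paper's argument rather than a copy of it. The paper extends $g$ to the one-point compactification $\alpha L$ and slices \emph{horizontally}: it covers each level set $g^{-1}[\frac{i}{k},\frac{i+1}{k}]$ by clopen sets on which $g$ stays in the enlarged interval $[\frac{2i-1}{2k},\frac{2i+3}{2k}]$, takes finite subcovers $B_i$ by compactness of $\alpha L$, and disjointifies them in a careful order beginning with $B_0$, so that the point $\infty$ automatically lands in the piece where $h=0$ and the remaining pieces are compact clopen subsets of $L$. You instead stay inside $L$ and work \emph{pointwise}: you exploit compactness of $F=\{t:|g(t)|\geq\frac{1}{2k}\}$, cover it by compact clopen patches of oscillation less than $\frac{1}{2k}$, disjointify, and assign to each patch the grid value nearest to a sampled value of $g$. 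Each route buys something. The compactification in the paper handles the vanishing-at-infinity requirement silently (compact support of the $C_i$ comes for free from $\infty\in C_0$), at the cost of implicitly needing that $\alpha L$ is $0$-dimensional --- which rests on exactly the structural fact you prove explicitly, that compact clopen sets form a basis of $L$. Your version makes that fact and its two uses (clopenness from $0$-dimensionality, compactness from local compactness, hence $h\in C_0^{\R}(L)$) fully visible, and your error bookkeeping --- oscillation under $\frac{1}{2k}$ plus nearest-point rounding loss at most $\frac{1}{k}$, the worst case at $\pm1$ --- isolates more transparently where the constant $\frac{3}{2k}$ comes from; the paper's bound arises instead from the half-step enlargement of level intervals, with the rounding asymmetry (toward $0$) hidden in the indexing of the $B_i$'s. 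Both arguments produce the same kind of object: a clopen-simple function with values in $\{\frac{i-k}{k}\}$, with some of the $2k-1$ sets possibly empty, which is harmless.
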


\begin{proof}
We regard $g$ as defined in $\alpha L$. Consider $i \in \{-k,-k+1,\dots,k-1\}$ and let
$K_i=g^{-1}[\frac{i}{k},\frac{i+1}{k}]$. Every $x \in K_i$ has a clopen neighbourhood $A_x$ such
that $g(A_x)\subseteq [\frac{2i-1}{2k},\frac{2i+3}{2k}]$. By compactness there exist $x_1, \dots,
x_n$ such that $K_i \subseteq \bigcup_{j=1}^n A_{x_j}\stackrel{\cdot}{=}B_i$. Finally, define
$C_0=B_0$, $C_1=B_{-k} \setminus C_0$, \dots, $C_k=B_{-1}\setminus (C_0 \cup \dots \cup C_{k-1})$,
$C_{k+1}=B_1\setminus (C_0 \cup \dots \cup C_k)$, \dots, $C_{2k-1}=B_{k-1}\setminus (C_0 \cup
\dots \cup C_{2k-2})$. Note that the $C_i$'s are a partition of $\alpha L$.

Now take $h: L \to \R$ given by $h=\sum_{i=1}^{2k-1}\frac{i-k}{k}\chi_{C_i}$. It is easy to check
that $\|h-g\| \leq \frac{3}{2k}$ and $h \in \B_{C_0^{\R}(L)}$.
\end{proof}

\begin{theorem}\label{thm: C0char}
Let $L$ be a locally compact Hausdorff space. The space $C_0^{\R}(L)$ is uniformly
convex-transitive if and only if $L$ is totally disconnected and satisfies $(\ast)$. If the space
$C_0^{\C}(L)$ is uniformly convex-transitive, then $L$ satisfies $(\ast)$. Moreover, if $\dim(\alpha L)\leq 1$,
then also the converse implication holds.
\end{theorem}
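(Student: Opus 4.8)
The plan is to prove the four implications separately, in each case writing a rotation of $C_0^{\K}(L)$ in its Banach--Stone form $R(f)(t)=\sigma(t)\,f(\phi(t))$ (with $\phi$ a homeomorphism of $L$ and $\sigma\colon L\to\S_{\K}$ continuous and unimodular, which by the classical Banach--Stone theorem describes every rotation), and using the averaged reformulation \eqref{eq: bullet} of uniform convex-transitivity. For the forward direction (valid over both $\R$ and $\C$) I would show uniform convex-transitivity forces $(\ast)$. Fix $\varepsilon>0$, a nonempty open $U\subseteq L$ and a compact $K\subseteq L$. By Urysohn's lemma choose $x\in\S_{C_0^{\K}(L)}$ with $0\le x\le 1$, $\{x\neq 0\}\subseteq U$ and $x(t_0)=1$ for some $t_0\in U$, together with $y\in\B_{C_0^{\K}(L)}$ satisfying $y\equiv 1$ on $K$. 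Feeding $x,y$ into \eqref{eq: bullet} yields rotations $R_i(f)=\sigma_i\,(f\circ\phi_i)$, $i\le N_{\varepsilon}$, with $\|y-\frac{1}{N_{\varepsilon}}\sum_i\sigma_i\,(x\circ\phi_i)\|\le\varepsilon$. Evaluating at $t\in K$, taking real parts, and using the pointwise bound $\mathrm{Re}\big(\sigma_i(t)x(\phi_i(t))\big)\le|x(\phi_i(t))|\le\chi_{\phi_i^{-1}(U)}(t)$ (valid because $\{x\neq 0\}\subseteq U$ and $\|x\|=1$) gives $\frac{1}{N_{\varepsilon}}\sum_i\chi_{\phi_i^{-1}(U)}(t)\ge 1-\varepsilon$ for $t\in K$, which is precisely $(\ast)$ with $M_{\varepsilon}=N_{\varepsilon}$. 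In the real case I additionally note that uniform convex-transitivity implies convex-transitivity, so Wood's characterization recalled in the Preliminaries forces $L$ to be totally disconnected.

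For the real converse, $L$ is now totally disconnected and locally compact, hence zero-dimensional, so clopen sets abound and Lemma \ref{lm: technical uct} applies. Given $x\in\S_{\X}$, $y\in\B_{\X}$ and $\varepsilon>0$, I first fix a clopen set $V$ on which $x$ is within $\delta$ of its peak value (replacing $x$ by the rotation $-x$ if that value is $-1$), and approximate $y$ by a step function $h_y=\sum_b d_b\chi_{B_b}$ supported on a compact clopen $W=\bigsqcup_b B_b$ via Lemma \ref{lm: technical uct}. Applying $(\ast)$ with this $U=V$ and $K=W$ produces $M_{\varepsilon}$ homeomorphisms $\psi_1,\dots,\psi_{M_{\varepsilon}}$ with $\frac{1}{M_{\varepsilon}}\sum_j\chi_{\psi_j^{-1}(V)}\ge 1-\varepsilon$ on $W$; thus for each $t\in W$ all but at most $\varepsilon M_{\varepsilon}$ indices satisfy $x(\psi_j(t))\approx 1$, and the few remaining ones contribute at most $\varepsilon$ whatever their values. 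On each clopen piece $B_b$ I assign locally constant signs $s_j^{(b)}\in\{-1,+1\}$ with the proportion of $+1$'s equal to $(1+d_b)/2$; a counting estimate over the variable but $(1-\varepsilon)$-dense active index set shows the resulting average is within $O(\varepsilon+\delta)$ of $d_b$ on $B_b$, and the $s_j^{(b)}$ patch together into continuous $\sigma_j$ on $W$. Finally the tail $L\setminus W$, where $h_y=0$, is annihilated by a doubling trick: replace each $\psi_j$ by two rotations whose sign functions agree on $W$ but equal $+1$ and $-1$ on the clopen set $L\setminus W$, so the two copies cancel off $W$ while reproducing the tuned value on $W$. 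This uses only $2M_{\varepsilon}$ rotations, a bound depending on $\varepsilon$ alone (the number of levels of $h_y$ affects only the approximation quality), which is exactly uniform convex-transitivity.

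For the complex converse I would follow the same skeleton, using $(\ast)$ with $U=\{|x-1|<\delta\}$ to transport the peak region of $x$ over the relevant compact set, so that $x(\phi_i(t))\approx 1$ for a $(1-\varepsilon)$-fraction of the indices. The decisive difference, and the main obstacle, is the tuning step: lacking clopen sets I must manufacture continuous \emph{unimodular} phase functions $\sigma_i\colon L\to\S_{\C}$ whose average approximates the complex target $y(t)\in\B_{\C}$. This is a genuine topological problem, since representing a value of modulus $\le 1$ as an average of unit vectors forces the phases to wind and to spread apart near the zeros of $y$, and a continuous such choice may fail to exist a priori. This is exactly where $\dim(\alpha L)\le 1$ is used: working on the compact space $\alpha L$, I would prescribe the phases on a closed subset where they are pinned down by the target and then extend them to continuous $\S_{\C}$-valued maps by Lemma \ref{lm: from Ra}. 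Once the phases are in hand, the active/inactive counting concludes as in the real case; the region where $y$ is negligible is absorbed using that $x,y\in C_0^{\C}(L)$ vanish at infinity and that homeomorphisms fix $\infty$ in $\alpha L$, again yielding a uniform bound of order $M_{\varepsilon}$ on the number of rotations, and hence uniform convex-transitivity of $C_0^{\C}(L)$.
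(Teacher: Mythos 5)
Your \emph{only if} argument and your real-case converse are essentially the paper's own proof. The forward direction (two Urysohn-type functions on $\alpha L$, the Banach--Stone form of rotations, and the real-part estimate $\mathrm{Re}\bigl(\sigma_i(t)x(\phi_i(t))\bigr)\le\chi_{\phi_i^{-1}(U)}(t)$) is the same computation as in the paper, and Wood's theorem gives total disconnectedness exactly as there. In the real converse the paper likewise combines Lemma \ref{lm: technical uct} with $(\ast)$ applied to the peak set of $f$ and the (compact clopen) support of the step function; the difference is bookkeeping. The paper multiplies the $M$ homeomorphisms by an \emph{independent} family of $2k-1$ nested sign patterns $T_j$, so it uses $M(2k-1)$ rotations and the sign-average accuracy (of order $1/k$) is decoupled from $M$; you instead fold the signs into the $M_\varepsilon$ homeomorphism indices. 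As written this leaks: a proportion $(1+d_b)/2$ of $+1$'s among only $M_\varepsilon$ indices can be realized only to accuracy of order $1/M_\varepsilon$, which is not controlled by $\varepsilon$ (think $M_\varepsilon=2$). The leak is harmless --- repeat each homeomorphism $c$ times, which preserves the conclusion of $(\ast)$, and then round --- but it needs to be said; your doubling trick for killing the tail on $L\setminus W$ is fine and slightly cleaner than the paper's treatment of the complement of $K$.

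The complex converse, however, has a genuine gap, and it sits exactly at the point the ``Moreover'' clause is about. You correctly isolate the obstacle (continuous unimodular $\sigma_i\colon L\to\S_{\C}$ must be produced without clopen sets) and correctly locate where $\dim(\alpha L)\le 1$ and Lemma \ref{lm: from Ra} enter (extending the phase $y/|y|$ from the closed set $\{|y|\ge\delta\}$ to a continuous $\beta\colon\alpha L\to\S_{\C}$). But you then claim that ``the active/inactive counting concludes as in the real case,'' and this cannot be right as stated: the real-case counting rests on locally constant signs on clopen pieces, i.e.\ precisely on the total disconnectedness that is absent here. What is missing is the actual mechanism that synthesizes the continuously varying modulus $|y(t)|$ out of a bounded number of continuous unimodular multipliers. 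The paper supplies it explicitly: with $\alpha(z,s)=-i^{2s}z$ and $\beta_g$ the extended phase, it uses the $2k$ rotations $e_{ja}(x)=\beta_g\cdot x$ and $e_{jb}(x)=\alpha\bigl(\beta_g,\min(1,\max(0,k|g|-j))\bigr)\cdot x$ for $1\le j\le k$; then $e_{ja}+e_{jb}$ annihilates $F$ where $|g|\le j/k$ and equals $2\beta_g F$ where $|g|\ge (j+1)/k$, so that $\frac{1}{2k}\sum_j(e_{ja}+e_{jb})(F)\approx|g|\beta_g\approx g$, with everything continuous in $t$. (An alternative with the same effect is to pair the phases $\beta e^{\pm i\arccos|y|}$.) Some such construction must be written down; without it, the implication $(\ast)+\dim(\alpha L)\le 1\implies C_0^{\C}(L)$ uniformly convex-transitive is asserted rather than proved.
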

Before the proof we comment on the above assumptions.
\begin{remark}\label{remark}
The spaces $C^{\R}(S^{1},\R^{2})$ and $C^{\C}(S^{1},\C)$ are uniformly convex-transitive, their rotations are of the
Banach-Stone type, and clearly $S^{1}$, $\mathcal{G}_{\R^{2}}$ and $\mathcal{G}_{\C}$ are not totally disconnected.
\end{remark}

\begin{proof}[Proof of Theorem \ref{thm: C0char}]
Let us first consider the {\it only if} directions. Since uniformly convex-transitive spaces are
convex-transitive, we may apply Wood's characterization for convex-transitive $C_0^{\R}(L)$ spaces,
and thus we obtain that $L$ must be totally disconnected. Let $C_0^{\K}(L),\ \K\in\{\R,\C\},$ be
uniformly convex-transitive. Next we aim to check that $L$ satisfies $(\ast)$, so let $U\subset L$
be a non-empty open subset and $K\subset L$ a compact subset. Fix $x_{0}\in U$. Since $\alpha L$
is normal, there exist continuous functions $f,g: \alpha L \to [-1,1]$ satisfying $f(\alpha L
\setminus U)=\{0\}$, $f(x_0)=1$, $g(K)=\{1\}$ and $g(\infty)=0$. Since both functions vanish at
infinity, we can consider that $f,g \in \S_{C_0^{\K}(L)}$.

Fix $\varepsilon>0$ appearing in condition $(\ast)$. Let $N_{\varepsilon}$ be the associated
constant provided by the uniform convex-transitivity and condition \eqref{eq: bullet}. Then by the
definition of $N_{\varepsilon}$ and the Banach-Stone characterization of rotations of
$C_0^{\K}(L)$ we obtain that there are continuous functions $\sigma_{1},\ldots
,\sigma_{N_{\varepsilon}}\colon L\rightarrow \K$ and homeomorphisms
$\phi_{1},\ldots,\phi_{N_{\varepsilon}}\colon L\rightarrow L$ such that
\begin{equation}\label{eq: gNe}
\begin{array}{l}
\left|\left|g-\frac{1}{N_{\varepsilon}}\sum_{i=1}^{N_{\varepsilon}}\sigma_{i}(f \circ\phi_{i})\right|\right|\leq\varepsilon.
\end{array}
\end{equation}
In particular, this yields for each $t\in K$ that
\begin{equation*}
\begin{array}{lcl}
\varepsilon &\geq& |1-\frac{1}{N_{\varepsilon}}\sum_{i=1}^{N_{\varepsilon}}\sigma_{i}f(\phi_{i}(t))|=|\frac{1}{N_{\varepsilon}}\sum_{i=1}^{N_{\varepsilon}}1-\sigma_{i}f(\phi_{i}(t))|\\
         &\geq& \frac{1}{N_{\varepsilon}}\sum_{i=1}^{N_{\varepsilon}}\chi_{L\setminus \phi_{i}^{-1}(U)}(t),
\end{array}
\end{equation*}
where we applied the fact that $f$ vanishes outside $U$. This justifies $(\ast)$ for
$M_{\varepsilon}=N_{\varepsilon}$.

Let us see the {\it if} direction for $C_0^{\R}(L)$. Let $k \in \N, f \in \S_{C_0^{\R}(L)}$ and $g
\in \B_{C_0^{\R}(L)}$. We may assume $\max f=1$. Take $h$ as in Lemma \ref{lm: technical uct},
i.e. $h=\sum_{i=1}^{2k-1}\frac{i-k}{k}\chi_{C_i}$ with each $C_i$ clopen and $\|h-g\| \leq
\frac{3}{2k}$.

Note that $K\stackrel{\cdot}{=}\bigcup_{i=1}^{2k-1}C_i$ is compact and apply $(*)$ to this $K$,
the subset $U\stackrel{\cdot}{=}\{t\in L: f(t) >1-k^{-1}\}$ and $\varepsilon=\frac{1}{k}$. Write
$M\stackrel{\cdot}{=}M_\varepsilon$. There exist homeomorphisms $\phi_1, \dots, \phi_M$ such that
if $t \in K$ then $\frac{1}{M}\sum_{i=1}^M\chi_{\phi_i^{-1}(U)}(t) \geq 1-k^{-1}$. For each $j \in
\{1, \dots, 2k-1\}$, define $B_j=\bigcup_{s=j}^{2k-1}C_s$ and let $T_j$ be the rotation on
$C_0^{\R}(L)$ given by $T_jx=(\chi_{B_j}-\chi_{B_j^c})\cdot x$ if $j\leq k$ and
$T_jx=(\chi_{B_j}-\chi_{B_j^c}+2\chi_{L\setminus K})\cdot x$ if $j > k$. Now only a few
calculations are needed to see that
\begin{equation*}
\begin{array}{l}
\left|\left|g-\frac{1}{M(2k-1)}\sum_{j=1}^{2k-1}\sum_{i=1}^MT_j(f\circ \phi_i)\right|\right| \leq 6k^{-1}
\end{array}
\end{equation*}
and thus $C_0^{\R}(L)$ is uniformly convex transitive.

In order to justify the last claim it is required to verify that if $L$ satisfies $\dim(\alpha
L)\leq 1$ and $(\ast)$, then $C_0^{\C}(L)$ is uniformly convex-transitive. Let $k\in \N$ and let
$M_{k}$ be the corresponding constant in condition $(\ast)$ associated to value $k^{-1}$. Fix
$f\in \S_{C_0^{\C}(L)}$ and $g\in\B_{C_0^{\C}(L)}$. We may assume without loss of generality,
possibly by multiplying $f$ with a suitable complex number of modulus $1$, that $f(t_{0})=1$ for a
suitable $t_{0}\in L$. Let $U\stackrel{\cdot}{=}\{t\in L:\ |1-f(t)|<k^{-1}\}$ and $K=\{t\in L:\
|g(t)|\geq k^{-1}\}$. Let $\phi_{1},\ldots,\phi_{M_{k}}\colon L\rightarrow L$ be homeomorphisms
such that $\frac{1}{M_{k}}\sum_{i=1}^{M_{k}}\chi_{\phi_{i}^{-1}(U)}(t)\geq 1-k^{-1}$ for $t\in K$.
This means that the average
\begin{equation}\label{eq: F}
\begin{array}{l}
F\stackrel{\cdot}{=}\frac{1}{M_{k}}\sum_{i=1}^{M_{k}}f\circ \phi_{i}\in \B_{C_0^{\C}(L)}
\end{array}
\end{equation}
satisfies $|1-F(t)|\leq 3k^{-1}$ for each $t\in K$.

Next we will define some auxiliary mappings. Put $\alpha\colon \S_{\C}\times [0,1]\rightarrow
\S_{\C};\ \alpha(z,s)=-i^{2s}z$. Note that this is a continuous map, and $\alpha(z,0)=-z$,
$\alpha(z,1)=z$ for $z\in\S_{\C}$. Taking into account Lemma \ref{lm: from Ra} with $T=\alpha L$,
let $\beta_{g}\colon L\rightarrow \S_{\C}$ be a continuous extension of the function $\frac{g(\cdot)}{|g(\cdot)|}$ defined on $K$.

For $j\in \{1,\ldots,k\}$ we define rotations on $C_0^{\C}(L)$ by putting
$e_{ja}(x)(t)=\beta_{g}(t)\cdot x(t)$ and $e_{jb}(x)(t)=\alpha(\beta_{g}(t),\min(1,\max(0,
k|g(t)|-j)))\cdot x(t)$. The main point above is that $(e_{ja}+e_{jb})(F)(t)=0$ for $(j,t)\in
\{1,\ldots,k\}\times L$ such that $|g(t)|\leq \frac{j}{k}$ and
$(e_{ja}+e_{jb})(F)(t)=2F(t)\beta_{g}(t)$ for $(j,t)\in \{1,\ldots,k\}\times L$ such that
$g(t)\geq \frac{j+1}{k}$. Thus, by using \eqref{eq: F} we obtain that
\begin{equation*}
\begin{array}{l}
\left|\left|\ |g(t)|\beta_{g}(t)-\frac{1}{2k}\sum_{j=1}^{k}(e_{ja}+e_{jb})(F)(t)\right|\right|\leq 2k^{-1}\quad \mathrm{for}\ t\in L.
\end{array}
\end{equation*}
Here $||g(\cdot)-|g(\cdot)|\beta_{g}(\cdot)||\leq k^{-1}$, so that $C_0^{\C}(L)$ is uniformly
convex-transitive.
\end{proof}

Note that Theorem \ref{thm: C0char} yields the fact that if $C_0^{\R}(L)$ is uniformly
convex-transitive, then so is $C_0^{\C}(L)$. By the above reasoning one can also see that if
$C_0^{\K}(L)$ is convex-transitive and $|L|>1$, then $L$ contains no isolated points and thus it
follows that each non-empty open subset of $L$ is uncountable. Cabello pointed out
\cite[Cor.1]{Ca?} that locally compact spaces $L$ having a basis of clopen sets $C$ such that
$L\setminus C$ is homeomorphic to $C$, have the property that $C_0^{\R}(L)$ is convex-transitive.
Consequently, this provides a route to the fact that the spaces $L^{\infty},\ \ell^{\infty}\mod
c_0$ and $C(\Delta)$ over $\R$, where $\Delta$ is the Cantor set, are convex-transitive. By
applying Theorem \ref{thm: C0char} and following Cabello's argument with slight modifications, one
arrives at the conclusion that these spaces are in fact uniformly convex-transitive. When studying
\cite{Ca?} it is helpful to observe that each occurence of 'basically disconnected' in the paper
must be read as \emph{totally disconnected}, (\cite{CaTa}).

It is quite easy to verify that if $L_{1},\ldots,L_{n}$, where $n\in \N$, are totally disconnected
locally compact Hausdorff spaces satisfying $(\ast)$, then so is the product $L_{1}\times \dots
\times L_{n}$. It follows that the space $C_0^{\R}(L_{1}\times \dots \times L_{n})$ (also known as
the injective tensor product $C_0^{\R}(L_{1})\hat{\otimes}_{\varepsilon} \dots
\hat{\otimes}_{\varepsilon} C_0^{\R}(L_{n})$, up to isometry) is uniformly convex-transitive.

\section{Uniform convex-transitivity of Banach-valued function spaces}

With a proof similar to that of lemma \ref{lm: technical uct}, we obtain the following:

\begin{lemma}\label{lm: technical uctx}
Let $L$ be a locally compact, Hausdorff, $0$-dimensional space and $X$ a Banach space over $\K$.
Given $g \in \B_{C_0^{\K}(L,X)}$ and $j\in \N$, there exist nonzero $x_1, \ldots, x_n \in \B_{\X}$
and disjoint clopen sets $C_1, C_2, \ldots, C_n\subset L$ such that the function $h \in
\B_{C_0^{\K}(L,\X)}$ defined by $h(t)=\sum_{i=1}^n \chi_{C_i}(t)x_i$ satisfies
$\|h-g\|<\frac{1}{j}$.
\end{lemma}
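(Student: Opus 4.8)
The plan is to mirror the proof of Lemma \ref{lm: technical uct}, replacing the partition of the scalar range $[-1,1]$ into short intervals by a direct treatment of the compact ``core'' of $g$, since in the vector-valued setting $\X$ carries no order. Fix $j\in\N$ and set $\delta\stackrel{\cdot}{=}\frac{1}{3j}$. Because $g\in C_0^{\K}(L,\X)$, the set $K\stackrel{\cdot}{=}\{t\in L:\ \|g(t)\|\geq \delta\}$ is compact; this compact set will play the role that $\alpha L$ plays in Lemma \ref{lm: technical uct}, while the region $\{\|g\|<\delta\}$ will be absorbed into the tail on which $h$ vanishes.

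First I would exploit $0$-dimensionality together with the continuity of $g$: for each $t\in K$, continuity at $t$ gives an open neighbourhood on which $\|g(\cdot)-g(t)\|<\delta$, and shrinking it within the clopen base furnishes a compact clopen neighbourhood $A_t\subseteq L$ of $t$ with $\|g(s)-g(t)\|<\delta$ for all $s\in A_t$. By compactness of $K$ I extract a finite subcover $A_{t_1},\ldots,A_{t_n}$ and disjointify exactly as in Lemma \ref{lm: technical uct}, setting $C_1=A_{t_1}$ and $C_i=A_{t_i}\setminus(C_1\cup\cdots\cup C_{i-1})$ for $i\geq 2$. Each $C_i$ is then compact and clopen in $L$, the $C_i$ are pairwise disjoint, and $\bigcup_i C_i=\bigcup_i A_{t_i}\supseteq K$.

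Next I would choose the representative vectors $x_i=g(t_i)$ and put $h=\sum_{i=1}^{n}\chi_{C_i}x_i$. Since $t_i\in K$ we have $\delta\leq \|x_i\|\leq 1$, so each $x_i$ is a nonzero element of $\B_{\X}$; and as $h$ is locally constant on the clopen pieces and supported on the compact set $\bigcup_i C_i$, it lies in $\B_{C_0^{\K}(L,\X)}$. The norm estimate then splits into two routine cases paralleling the end of Lemma \ref{lm: technical uct}: for $t\in C_i\subseteq A_{t_i}$ one has $\|h(t)-g(t)\|=\|g(t_i)-g(t)\|<\delta$, while for $t\notin\bigcup_i C_i$ one has $h(t)=0$ and, since $t\notin K$, also $\|g(t)\|<\delta$; hence $\|h-g\|\leq\delta=\frac{1}{3j}<\frac{1}{j}$.

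I expect no genuine obstacle, as the combinatorial skeleton (clopen cover, disjointification, step-function approximant) is identical to Lemma \ref{lm: technical uct}. The only point requiring care, and the sole place the vector-valued case truly differs, is that the scalar level sets $g^{-1}[\frac{i}{k},\frac{i+1}{k}]$ must be replaced by clopen neighbourhoods of small norm-oscillation; localizing everything to the compact sublevel complement $K=\{\|g\|\geq\delta\}$, which is available precisely because $g\in C_0$, simultaneously supplies the compactness needed for the finite subcover and guarantees for free that the chosen values $x_i=g(t_i)$ are nonzero.
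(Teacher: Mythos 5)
Your proof is correct, and it is essentially the adaptation the paper has in mind: the paper gives no separate argument for this lemma, stating only that it follows ``with a proof similar to that of Lemma \ref{lm: technical uct}'', and your argument reproduces exactly that skeleton (small-oscillation clopen neighbourhoods, finite subcover, disjointification, step-function approximant), with the range-grid of Lemma \ref{lm: technical uct} sensibly replaced by the values $g(t_i)$ and the point at infinity replaced by the compact core $K=\{\|g\|\geq\delta\}$.
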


\begin{theorem}\label{thm: CLX}
Let $L$ be a locally compact Hausdorff space and $\X$ a Banach space over $\K$. Consider the
following conditions:
\begin{enumerate}
\item[(1)]{$L$ is totally disconnected and satisfies $(\ast)$, i.e. $C_0^{\R}(L)$ is uniformly convex-transitive.}
\item[(2)]{$\X$ is uniformly convex-transitive.}
\item[(3)]{$C_0^{\K}(L,\X)$ is uniformly convex-transitive.}
\end{enumerate}
We have the implication $(1)+(2)\implies (3)$. If the rotations of $C_0^{\K}(L,\X)$ are of the
Banach-Stone type and $\dim_{\K}(\X)\geq 1$, then $(3)\implies (\ast)+(2)$. If additionally
$\K=\R$ and $\mathcal{G}_{\X}$ is totally disconnected, then $(3)\implies (1)+(2)$.
\end{theorem}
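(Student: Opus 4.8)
The plan is to prove the three implications separately, recycling the machinery behind Theorem~\ref{thm: C0char}. Throughout I use the $N_{\varepsilon}$-formulation \eqref{eq: bullet} of uniform convex-transitivity, and in the reverse directions the Banach-Stone form $R(f)(t)=\sigma(t)(f(\phi(t)))$, with $\phi$ a homeomorphism of $L$ and $\sigma\colon L\to\mathcal{G}_{\X}$ continuous.

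For $(1)+(2)\implies(3)$, which I expect to be the main obstacle, I would fuse the spatial transitivity carried by $(\ast)$ on $L$ with the value transitivity of $\X$. Fix $f\in\S_{C_0^{\K}(L,\X)}$, $g\in\B_{C_0^{\K}(L,\X)}$ and $\varepsilon>0$. First reduce $g$ to a simple function $h=\sum_{i=1}^{n}\chi_{C_i}x_i$ via Lemma~\ref{lm: technical uctx}, with the $C_i$ disjoint clopen and $K:=\bigcup_iC_i$ compact and clopen. Pick $t_0$ with $\|f(t_0)\|$ close to $1$, put $u=f(t_0)/\|f(t_0)\|\in\S_{\X}$, and let $U$ be a neighbourhood of $t_0$ on which $f\approx f(t_0)$. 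Applying $(\ast)$ to $U$ and $K$ gives homeomorphisms $\phi_1,\dots,\phi_{M_{\varepsilon}}$ whose averaged indicator is $\geq1-\varepsilon$ on $K$, so that $f\circ\phi_l\approx u$ on $K$ for most $l$. For each $i$ the uniform convex-transitivity of $\X$ supplies $S^{i}_1,\dots,S^{i}_{N_{\varepsilon}}\in\mathcal{G}_{\X}$ with $\frac1{N_{\varepsilon}}\sum_rS^{i}_r(u)\approx x_i$. The Banach-Stone rotations $R_{l,r}(f)(t)=\sigma_r(t)(f(\phi_l(t)))$, with $\sigma_r:=S^{i}_r$ on $C_i$, then average over $(l,r)$ to $\approx x_i=h(t)$ on each $C_i$. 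The one subtlety is the region $L\setminus K$, where this average need not vanish; I would annihilate it by doubling each rotation with a companion carrying $\sigma_r=-\I$ on $L\setminus K$ (and the same $S^{i}_r$ on $C_i$), using that $\pm\I\in\mathcal{G}_{\X}$, so the two copies cancel off $K$ and reinforce on $K$. Since $\sigma_r$ is defined piecewise on the finitely many clopen sets $C_i,\,L\setminus K$, this costs one rotation per $(l,r)$ (times two), giving a uniform count $2M_{\varepsilon}N_{\varepsilon}$ independent of $n$; the accumulated errors are $O(\varepsilon)$, so \eqref{eq: bullet} holds and $C_0^{\K}(L,\X)$ is uniformly convex-transitive.

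For the reverse directions assume the rotations are of Banach-Stone type and $\dim_{\K}\X\geq1$, and fix a unit vector $e\in\S_{\X}$ with norming functional $e^{\ast}$. To recover $(\ast)$ I imitate the only-if part of Theorem~\ref{thm: C0char}: given open $U$ and compact $K$, take $f=\theta e$ supported in $U$ with $\theta(x_0)=1$ and $g=\psi e$ with $\psi|_K=1$, both of norm $1$; uniform convex-transitivity yields $\|g-\frac1N\sum_i\sigma_i(f\circ\phi_i)\|\leq\varepsilon$, and evaluating $\mathrm{Re}\,e^{\ast}$ at $t\in K$ together with $\|f(\phi_i(t))\|\leq\chi_{\phi_i^{-1}(U)}(t)$ forces $\frac1N\sum_i\chi_{\phi_i^{-1}(U)}(t)\geq1-\varepsilon$, i.e. $(\ast)$ with $M_{\varepsilon}=N$. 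To recover $(2)$, fix $x\in\S_{\X}$, $y\in\B_{\X}$ and a bump $\rho\colon L\to[0,1]$ with $\rho(t_0)=\max\rho=1$; apply uniform convex-transitivity to $f=\rho x\in\S_{\X}$ and $g=\rho y\in\B_{\X}$ and evaluate the resulting estimate at $t_0$. This gives $\|y-\frac1N\sum_ic_iS_i(x)\|\leq\varepsilon$ with $S_i=\sigma_i(t_0)\in\mathcal{G}_{\X}$ and $c_i=\rho(\phi_i(t_0))\in[0,1]$. The coefficients need not sum to $1$, but since $0=\frac12 x-\frac12 x\in C_2(x)$ I pad the defect with the pair $\I,-\I$, promoting the sub-convex combination to a genuine element of $C_{N+2}(x)$; hence $\dist(y,C_{N+2}(x))\leq\varepsilon$ and $\X$ is uniformly convex-transitive.

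For the last implication add $\K=\R$ and $\mathcal{G}_{\X}$ totally disconnected; it remains to prove $L$ totally disconnected, and for this I only need convex-transitivity of $C_0^{\R}(L,\X)$. The key observation is that if $\Lambda$ is a connected component of $L$, then $\sigma|_{\Lambda}$ is a constant isometry (the continuous image $\sigma(\Lambda)$ is connected, hence a singleton in the totally disconnected $\mathcal{G}_{\X}$), while every $\phi$ permutes components. Consequently the component-oscillation $\Omega(f):=\sup_{\Lambda}\diam f(\Lambda)$ is rotation-invariant, convex and norm-continuous. If some component carried two points, then $\sup_{\B_{C_0^{\R}(L,\X)}}\Omega=2$ would be attained, whereas $f=\rho e$ with $\rho\geq0$, $\max\rho=1$, lies in $\S_{C_0^{\R}(L,\X)}$ and satisfies $\Omega(f)\leq1<2$; invariance and convexity then force $\Omega\leq1$ on $\overline{\conv}\,\mathcal{G}(f)$, so $f$ is not a big point, contradicting convex-transitivity. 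Hence every component is a singleton and $(1)$ holds. I expect the bookkeeping in $(1)+(2)\implies(3)$ to be the real work, the reverse implications being comparatively routine once the test functions and the invariant $\Omega$ are in place.
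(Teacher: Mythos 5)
Your proposal is correct, and on two of the three implications it runs along essentially the same lines as the paper. For $(1)+(2)\implies(3)$ your argument is the published one: reduce $g$ to a simple function via Lemma \ref{lm: technical uctx}, use $(\ast)$ to average $f$ spatially into an almost constant unit vector on the relevant compact clopen set, then rotate that vector piecewise on the clopen sets $C_i$ using the uniform convex-transitivity of $\X$; your device of doubling each rotation with a companion equal to $-\I$ off $K$ is the same cancellation trick as the paper's alternating factor $\chi_{L\setminus B}(t)(-1)^{l}$ in its rotations $R_l$, and both give a count ($2M_{\varepsilon}N_{\varepsilon}$) independent of the number of pieces. For $(3)\implies(\ast)+(2)$ the paper only says that the scalar argument transfers and that functions of type $f\otimes x$, $g\otimes y$ make $(2)$ easy to verify; your version with the norming functional $e^{\ast}$, and in particular the padding of the sub-convex combination by the pair $\I,-\I$ so as to land in $C_{N+2}(x)$, is a correct completion of details the paper leaves to the reader.

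Where you genuinely diverge is the total disconnectedness of $L$ in the last implication. The paper picks $t\neq s$ in a nontrivial component $C$, considers $f\otimes x$ and $f\otimes x-g\otimes x$ with $f,g$ of disjoint supports, and uses convex-transitivity plus the Banach-Stone form to produce a single continuous $\sigma\colon L\to\mathcal{G}_{\X}$ with $x^{\ast}(\sigma(t)x)>0>x^{\ast}(\sigma(s)x)$, so that $\sigma(C)$ is a connected non-singleton in $\mathcal{G}_{\X}$. You instead introduce the component-oscillation $\Omega(f)=\sup_{\Lambda}\diam f(\Lambda)$, check it is invariant under Banach-Stone rotations (because $\sigma$ is constant on components and $\phi$ permutes them), convex and norm-continuous, and then separate the big point $\rho e$ from a function of oscillation $2$. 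Both proofs hinge on the same fact -- continuity of $\sigma$ into the totally disconnected $\mathcal{G}_{\X}$ forces constancy on components -- but your packaging has a concrete advantage: invariance, convexity and continuity pass to $\overline{\conv}\,\mathcal{G}(f)$, so you never need a single rotation close to the target. The paper's assertion that $f\otimes x-g\otimes x\in\mathcal{G}_{C_0^{\R}(L,\X)}(f\otimes x)$ is, as written, stronger than what convex-transitivity provides (one only gets membership in the closed convex hull, after which a pigeonhole extraction of one suitable rotation is required); your invariant sidesteps that step entirely.
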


Recall Remark \ref{remark} related to the last claim above.

\begin{proof}[Proof of Theorem \ref{thm: CLX}]
We begin by proving the implication $(1)+(2)\implies (3)$. Fix $k\in \N$. Then condition $(\ast)$
provides us with an integer $N_{k}$ associated to $\frac{1}{4k}$. Let $f\in \S_{C_0^{\K}(L,\X)}$
and $g\in \B_{C_0^{\K}(L,\X)}$. Take $h$ and $C_{1},\ldots,C_{n}\subset L$ as in Lemma $\ref{lm: technical uctx}$
with $j=2k$.

Let $B=\bigcup_{i=1}^n C_i$ and $K=\{t \in L: \|g(t)\| \geq k^{-1}\}$. Note that $B$ is a compact clopen set and $K\subset B$.
There are $y\in \S_{\X}$ and $T_{1},\ldots,T_{N_{k}} \in \mathcal{G}_{C_{0}^{\K}(L,\X)}$ such that
\begin{equation}\label{eq: yNk}
\begin{array}{l}
\left|\left|y-\left(\frac{1}{N_{k}}\sum_{i=1}^{N_{k}}T_{i}f\right)(t)\right|\right|<\frac{1}{k},\quad \mathrm{for}\ t\in B.
\end{array}
\end{equation}
Indeed, observe that the continuous map $L\rightarrow \R;\ t\mapsto ||f(t)||$ attains its supremum, the value $1$.
Thus, let $t_{0}\in L$ be such that $||f(t_{0})||=1$ and let $y=f(t_{0})\in \S_{\X}$.
Write $V=\{t\in L:\ ||f(t)-y||<\frac{1}{2k}\}$. By using $(\ast)$ there are homeomorphisms
$\sigma_{1},\ldots, \sigma_{N_{k}}\colon L\rightarrow L$ such that
\begin{equation}\label{eq: NkB}
\begin{array}{l}
\frac{1}{N_{k}}\sum_{i=1}^{N_{k}}\chi_{V}(\sigma_{i}(t))\geq 1-\frac{1}{4k},\quad \mathrm{for}\ t\in B.
\end{array}
\end{equation}
Let $T_{i}\in\mathcal{G}_{C_{0}^{\K}(L,\X)}$ be given by $(T_{i}F)(t)=F(\sigma_{i}(t))$ for $1\leq i\leq N_{k}$ and
$F\in C_{0}^{\K}(L,\X)$. Thus, for all $t\in B$ we obtain by \eqref{eq: NkB} and the definition of $V$ that
\begin{equation*}
\begin{array}{lll}
& &\left|\left|y-\left(\frac{1}{N_{k}}\sum_{i=1}^{N_{k}}T_{i}f\right)(t)\right|\right|\phantom{\bigg |}\\
&=& \left|\left|\frac{1}{N_{k}}\sum_{i=1}^{N_{k}}y-\frac{1}{N_{k}}\sum_{i=1}^{N_{k}}f(\sigma_{i}(t))\right|\right|\leq \frac{1}{N_{k}}\sum_{i=1}^{N_{k}}||y-f(\sigma_{i}(t))||\phantom{\bigg |}\\
&<&(1-\frac{1}{4k})\cdot \frac{1}{2k}+\frac{1}{4k}\cdot 2<\frac{1}{k}.\phantom{\bigg |}
\end{array}
\end{equation*}

Since $X$ is uniformly convex-transitive, there is an integer $2M=N_{\varepsilon}$ satisfying \eqref{eq: bullet} for the value
$\varepsilon=k^{-1}$. Let $S^{(i)}_{1},\ldots , S^{(i)}_{2M}\in \mathcal{G}_{\X}$ for $1\leq i\leq n$ such that
\begin{equation}\label{eq: xMl}
\begin{array}{l}
\left|\left|x_{i}-\frac{1}{2M}\sum_{l=1}^{2M}S^{(i)}_{l}(y)\right|\right|<k^{-1}\quad \mathrm{for}\ 1\leq i\leq n.
\end{array}
\end{equation}
Then for each $1\leq l\leq 2M$ we define a rotation on $C_0^{\K}(L,\X)$ by
\[R_{l}(F)(t)=\chi_{L\setminus B}(t)(-1)^{l}F(t)+\sum_{i=1}^{n}\chi_{C_{i}}S^{(i)}_{l}(F(t)),
\quad F\in C_0^{\K}(L,\X),\ t\in L.\] Indeed, this defines rotations, since the sets $L\setminus
B$ and $C_{i}$ are clopen. It is easy to see by combining \eqref{eq: yNk} and \eqref{eq: xMl} that
\begin{equation*}
\begin{array}{l}
\left|\left|g-\frac{1}{2M}\sum_{l=1}^{2M}R_{l}\frac{1}{N_{k}}\sum_{i=1}^{N_{k}}T_{i}f\right|\right|<2k^{-1}.
\end{array}
\end{equation*}
This verifies the first implication.

Next we will prove the implication $(3)\implies (\ast)+(2)$ under the assumption that the
rotations are of the Banach-Stone type. In fact, the verification of claim  $(3)\implies (\ast)$
reduces to the analogous scalar-valued case, which was treated in the proof of Theorem \ref{thm:
C0char}. Moreover, by using the Banach-Stone representation of rotations and functions of type
$f\otimes x, g\otimes y\in \S_{C^{\K}_{0}(L,\X)}$ it is easy to verify that the uniform
convex-transitivity of $C_0^{\K}(L,\X)$ implies that of $\X$.

Finally, let us prove the total disconnectedness of $L$ in the case when $\mathcal{G}_{\X}$ is
totally disconnected and $\K=\R$. Assume to the contrary that $L$ contains a connected subset $C$,
which is not a singleton. Pick $t,s\in C,\ t\neq s,$ and $x\in \S_{\X}$. Let $x^{\ast}\in
\S_{\X^{\ast}}$ with $x^{\ast}(x)=1$. Let $f,g\in \S_{C_0^{\R}(L)}$ be functions with disjoint
supports and such that $f(t)=g(s)=1$. Consider $f\otimes x, f\otimes x -g\otimes x\in
\S_{C_0^{\R}(L,\X)}$. Since $C_0^{\R}(L,\X)$ is convex-transitive we obtain that $f\otimes x
-g\otimes x\in\mathcal{G}_{C_0^{\R}(L,\X)}(f\otimes x)$.

It follows easily by using the Banach-Stone representation of rotations of $C_0^{\R}(L,\X)$ that
there exists a continuous map $\sigma\colon L\rightarrow \mathcal{G}_{\X}$ such that
\[x^{\ast}(\sigma(t)(x)),x^{\ast}(-\sigma(s)(x))>0.\]

By using the facts that $\sigma(t)\neq\sigma(s)$ and that $\mathcal{G}_{\X}$ is totally disconnected we obtain
that $\sigma(C)$ is not connected. However, we have a contradiction, since $\sigma(C)$ is a continuous image of
a connected set. This contradiction shows that $L$ must be totally disconnected.
\end{proof}

By following the argument in the previous proof with slight modifications one obtains an analogous
result in the convex-transitive setting.
\begin{theorem}
If $C_0^{\R}(L)$ is convex-transitive and $\X$ is a convex-transitive space over $\K$, then
$C_0^{\K}(L,\X)$ is convex-transitive.
\end{theorem}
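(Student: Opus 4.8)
The plan is to replay the proof of the implication $(1)+(2)\implies(3)$ in Theorem \ref{thm: CLX}, replacing every uniform quantitative ingredient (the constants $N_{\varepsilon}$, $M$ and the uniform weights $\frac1N$) by the purely qualitative density statements that convex-transitivity supplies: that each $y\in\S_{\X}$ is a big point, and that convex-transitivity of $C_0^{\R}(L)$ is equivalent to the \emph{non-uniform} analogue of $(\ast)$, i.e. Cabello's covering condition (g) of \cite{Ca?}, in which one is allowed an arbitrary finite convex combination $\sum_j a_j\chi_{\phi_j^{-1}(U)}\geq 1-\varepsilon$ of homeomorphism pull-backs with no bound on the number of terms. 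Fix $f\in\S_{C_0^{\K}(L,\X)}$, $g\in\B_{C_0^{\K}(L,\X)}$ and $\varepsilon>0$; the goal is to place $g$ within a fixed multiple of $\varepsilon$ of a convex combination of rotations of $f$, which exhibits $f$ as a big point and hence establishes convex-transitivity of $C_0^{\K}(L,\X)$.

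First I would use that $L$ is totally disconnected (a consequence of the convex-transitivity of $C_0^{\R}(L)$ via Wood's characterization) together with Lemma \ref{lm: technical uctx} to replace $g$ by a simple function $h=\sum_{i=1}^{n}\chi_{C_i}x_i$ with the $C_i$ disjoint and clopen, $x_i\in\B_{\X}$ and $\|h-g\|<\varepsilon$. Put $B=\bigcup_{i=1}^{n}C_i$, a compact clopen set, choose $t_0\in L$ with $\|f(t_0)\|=1$, set $y=f(t_0)\in\S_{\X}$ and let $V=\{t\in L:\ \|f(t)-y\|<\varepsilon\}$, an open neighbourhood of $t_0$. Applying the non-uniform covering property to the nonempty open set $V$ and the compact set $B$ produces homeomorphisms $\sigma_1,\dots,\sigma_p$ of $L$ and convex weights $a_1,\dots,a_p$ with $\sum_j a_j\chi_V(\sigma_j(t))\geq 1-\varepsilon$ for $t\in B$. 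Writing $T_j$ for the composition rotation $F\mapsto F\circ\sigma_j$ of $C_0^{\K}(L,\X)$, the triangle inequality then gives $\|y-\sum_j a_j f(\sigma_j(t))\|<3\varepsilon$ for every $t\in B$, exactly as in the scalar estimate of Theorem \ref{thm: CLX}.

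Next I would bring in the convex-transitivity of $\X$: since $y\in\S_{\X}$ is a big point, each $x_i\in\B_{\X}$ lies within $\varepsilon$ of a convex combination of rotations of $y$. As there are only finitely many $x_i$, I would rationalise all these combinations to a common denominator --- the device already used in the proof of the opening Proposition of Section 2 --- obtaining a single \emph{even} integer $D$ and rotations $S^{(i)}_1,\dots,S^{(i)}_D\in\mathcal{G}_{\X}$ with $\|x_i-\frac1D\sum_{l=1}^{D}S^{(i)}_l(y)\|<\varepsilon$ for every $i$. For each $l$ I assemble an operator $R_l$ on $C_0^{\K}(L,\X)$ acting fibrewise as $S^{(i)}_l$ on $C_i$ and as the scalar $(-1)^l$ on $L\setminus B$; this is a genuine isometric automorphism because $C_1,\dots,C_n,L\setminus B$ are clopen and partition $L$. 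Averaging over $l$, the factor $(-1)^l$ cancels on $L\setminus B$ (this is where $D$ even is needed), while on each $C_i$ the average reproduces $x_i$; combined with the $L$-spreading of the previous paragraph this yields $\|g-\sum_{l=1}^{D}\sum_{j=1}^{p}\frac{a_j}{D}(R_lT_j)f\|<C\varepsilon$ for an absolute constant $C$, and the left-hand operator is a convex combination of rotations of $f$.

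The hard part will be precisely this merging of two independent families of convex combinations --- the $L$-spreading weights $a_j$ and the $\X$-side combinations realising the several $x_i$ --- into a single convex combination of rotations of the one function $f$. In Theorem \ref{thm: CLX} both families carry uniform weights inherited from the uniform constants $N_k$ and $2M$, so their product is automatically a double uniform average and no alignment is required; here the weights are unequal and, worse, the $\X$-side data depend on the index $i$. The common-denominator rationalisation removes this $i$-dependence and at the same time furnishes the even count needed for the $(-1)^l$ cancellation off $B$, after which the two averages commute fibrewise and the estimate closes as above. Since $\varepsilon>0$ was arbitrary we conclude $g\in\overline{\conv}(\mathcal{G}_{C_0^{\K}(L,\X)}(f))$, and as $g\in\B_{C_0^{\K}(L,\X)}$ and $f\in\S_{C_0^{\K}(L,\X)}$ were arbitrary, $C_0^{\K}(L,\X)$ is convex-transitive.
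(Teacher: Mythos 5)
Your proposal is correct and is essentially the paper's own proof: the paper's argument is exactly the sketch you execute, namely replay the implication $(1)+(2)\implies(3)$ of Theorem \ref{thm: CLX} with convex combinations of rotations in place of uniform averages, invoking the convex-transitivity of $C_0^{\R}(L)$ (via the non-uniform covering condition) at the stage of \eqref{eq: yNk}. Your common-denominator rationalization is a sound way to supply the one detail the paper's sketch leaves implicit --- that the $(-1)^l$ cancellation off $B$ needs equal weights --- though one can also avoid it by splitting each weight $c_l$ into $\frac{c_l}{2}+\frac{c_l}{2}$ over two rotations acting as $+1$ and $-1$ on $L\setminus B$.
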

\begin{proof}
The proof of Theorem \ref{thm: CLX} has the convex-transitive counterpart with convex combinations of rotations in place of averages
of rotations. Indeed, in the equation \eqref{eq: yNk} one uses the convex-transitivity of $C_0^{\R}(L)$ and the corresponding Banach-Stone
type rotations applied on $C_0^{\K}(L,\X)$. After equation \eqref{eq: yNk} the argument proceeds similarly. Note that in the
convex-transitive setting there does not exist, a priori, an upper bound $M$ depending only on $\epsilon$.
\end{proof}

Recall that the Lebesgue-Bochner space $L^p(\X)$ consists of strongly measurable maps $f\colon
[0,1]\rightarrow \X$ endowed with the norm
\[||f||_{L^p(\X)}^{p}=\int_{0}^{1}||f(t)||_{\X}^{p}\ \mathrm{d}t,\quad \mathrm{for}\ p\in [1,\infty)\]
and $||f||_{L^{\infty}(\X)}=\underset{t\in [0,1]}{\mathrm{ess\ sup}}||f(t)||_{X}$.
We refer to \cite{DU} for precise definitions and background information regarding the Banach-valued function spaces
appearing here.

Recall that $L^{\infty}$ is convex-transitive (see \cite{PR} and \cite{Rol}). Greim, Jamison and
Kaminska proved that $L^p(\X)$ is almost transitive if $\X$ is almost transitive and $1\leq
p<\infty$, see \cite[Thm. 2.1]{GJK}. We will present the analogous result for uniformly
convex-transitive spaces, that is, if $\X$ is uniformly convex-transitive, then $L^p(\X)$ are also
uniformly convex-transitive for $1\leq p\leq \infty$.

\begin{theorem}\label{thm: LPX}
Let $\X$ be a uniformly convex-transitive space over $\K$.
Then the Bochner space $L_{\K}^{p}(\X)$ is uniformly convex-transitive for $1\leq p\leq\infty$.
\end{theorem}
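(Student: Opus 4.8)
The plan is to treat $1\le p<\infty$ by a two--stage ``hub'' argument and to reduce the case $p=\infty$ to earlier results. Throughout I use the equivalent formulation of uniform convex-transitivity given by \eqref{eq: bullet}: it suffices to produce, for each accuracy $\delta>0$, a bound $N$ depending only on $\delta$, $\X$ and $p$ such that every $g\in\B_{L^p_\K(\X)}$ lies within $\delta$ of an average of $N$ rotations of any prescribed $f\in\S_{L^p_\K(\X)}$. The two families of rotations I will use are (a) the scalar weighted-composition isometries $W\colon u\mapsto h\cdot(u\circ\sigma)$ of scalar $L^p$, with $\sigma$ an a.e.-bijection of $([0,1],m)$ and $|h|^p=\mathrm d(m\circ\sigma^{-1})/\mathrm dm$, which tensor to genuine surjective isometries $(W\otimes\I_\X)F(t)=h(t)F(\sigma(t))$ of $L^p_\K(\X)$ (the identity $\int|h|^p\,\|F\circ\sigma\|^p=\int\|F\|^p$ holds for every $p$, so no Hilbertian structure is needed and $p=2$ causes no trouble); and (b) the fibrewise rotations $(Rf)(t)=S(t)(f(t))$ with $t\mapsto S(t)\in\mathcal G_\X$ measurable.

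Fix a reference vector $y_0\in\S_\X$ and the hub $f_\ast=\1\otimes y_0\in\S_{L^p_\K(\X)}$. I would first show (Claim A) that $f_\ast$ is reachable from an arbitrary $f\in\S$: since the scalar profile $u=\|f(\cdot)\|$ and $\1$ have the same $L^p$-mass $1$, a single weighted-composition isometry $W$ (realizing the almost transitivity of scalar $L^p$ at the level of moduli) gives $\tilde W f=(W\otimes\I_\X)f$ with magnitude profile $\|\tilde W f(\cdot)\|=Wu$ within $\varepsilon$ of $\1$ in $L^p$. Then approximate $\tilde Wf$ by a simple function $\sum_s\chi_{P_s}c_sz_s$ on measurable sets and, on each piece, invoke the uniform convex-transitivity of $\X$ with $x=z_s\in\S_\X$ and target $y_0$ to get fibrewise rotations $R_1,\dots,R_n$, $n=N_\varepsilon(\X)$, with $\tfrac1n\sum_l R_l z_s\approx y_0$; weighting the fibre errors by $c_s$ and integrating shows $\tfrac1n\sum_l R_l\tilde Wf$ is within $O(\varepsilon)$ of $f_\ast$, an average of $n$ rotations of $f$.

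Next (Claim B) I would reach an arbitrary $g\in\B$ from $f_\ast$. Approximate $g$ by a simple $h=\sum_s\chi_{P_s}v_s$ on equal-measure atoms, with $\tfrac1M\sum_s\|v_s\|^p\le1$. The key device is to absorb the mass deficit into the fibres rather than into the scalar stage: choose a magnitude profile $\tilde\rho_s\ge\|v_s\|$ with $\tfrac1M\sum\tilde\rho_s^p=1$, so that $\tilde r=\sum_s\tilde\rho_s\chi_{P_s}$ has $\|\tilde r\|_p=1$. A single same-mass weighted-composition isometry sends $f_\ast$ to within $\varepsilon$ of $\tilde r\otimes y_0$, and then, since $v_s/\tilde\rho_s\in\B_\X$, the uniform convex-transitivity of $\X$ (with $x=y_0$, target $v_s/\tilde\rho_s$) yields fibrewise rotations whose average sends $\tilde\rho_s y_0$ to within $\tilde\rho_s\varepsilon$ of $v_s$; integrating the $p$-th powers against $\tfrac1M\sum\tilde\rho_s^p=1$ gives $L^p$-error $O(\varepsilon)$. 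Composing the two stages, $\tfrac1n\sum_k P_k\bigl(\tfrac1n\sum_i T_i f\bigr)=\tfrac1{n^2}\sum_{i,k}P_kT_if$ is an average of $n^2$ rotations of $f$ within $O(\varepsilon)$ of $g$ (using that an average of isometries is a contraction); choosing $\varepsilon$ proportional to $\delta$ makes $N=n^2=N_{c\delta}(\X)^2$ depend only on $\delta$ and $\X$, which is the required uniform bound.

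For $p=\infty$ the integral weighting of fibre errors is unavailable, but there every $g\in\B$ satisfies $\|g(t)\|\le1$ a.e., so no magnitude amplification is ever needed; I would instead deduce this case from the fact that $L^\infty_\K(\X)$ is uniformly convex-transitive, equivalently by identifying $L^\infty_\K(\X)\cong C^\K(K,\X)$ with $K$ the (totally disconnected) Stone space of the measure algebra and applying Theorem \ref{thm: CLX}, since $C^\K(K)\cong L^\infty$ is uniformly convex-transitive and hence $K$ satisfies $(\ast)$. The main obstacle I anticipate is the rotation bookkeeping in the scalar stage: one must verify carefully that same-mass magnitude profiles are matched, approximately and allowing for zeros of $u$ and of $\tilde r$, by honest surjective weighted-composition isometries that tensor to $L^p_\K(\X)$, and that the ``top-up'' $\tilde\rho_s\ge\|v_s\|$ can always be arranged within the atoms; granting these, the uniform count follows mechanically from the single constant $N_\varepsilon(\X)$ and the contractivity of averaged isometries.
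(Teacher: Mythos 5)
Your treatment of $1\le p<\infty$ is essentially correct, and it is in substance the route the paper takes: for that range the paper simply invokes the Greim--Jamison--Kaminska almost-transitivity argument, replacing the single rotations $U_ix_i$ there by averages taken from $\conv(\mathcal{G}_{\X}(x_i))$. Your two-stage hub through $\1\otimes y_0$ --- weighted-composition isometries tensored with $\I_{\X}$ to normalize the modulus, fibrewise rotations from the uniform convex-transitivity of $\X$ to steer the directions, and the count $N=N_{c\delta}(\X)^2$ coming out of the composition --- is the same mechanism with the uniform bookkeeping made explicit, and the caveats you flag (zeros of the modulus, surjectivity of the weighted compositions, the case $p=2$) are all handled the way you indicate.

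The genuine gap is at $p=\infty$, which is exactly the case the paper concentrates on. The identification $L^{\infty}_{\K}(\X)\cong C^{\K}(K,\X)$, with $K$ the Stone space of the measure algebra, is \emph{false} whenever $\X$ is infinite-dimensional. The canonical isometry $L^{\infty}\cong C(K)$ extends to finitely-valued functions, and since $K$ is compact and zero-dimensional this extension identifies $C^{\K}(K,\X)$ with the closure of the \emph{finitely}-valued functions in $L^{\infty}(\X)$, i.e.\ with the functions whose essential range is relatively compact; but in the Bochner space only the \emph{countably}-valued functions are dense (Fact \ref{fact1}). Concretely, take $\X=\ell^2$ with orthonormal vectors $(e_n)$ and $F=\sum_n\chi_{A_n}e_n$, where $(A_n)$ is a partition of $[0,1]$ into sets of positive measure: any finitely-valued $G$ has some value within $\frac{1}{\sqrt2}$ of at most one $e_n$, so $\|F-G\|_{L^{\infty}(\ell^2)}\geq\frac{1}{\sqrt2}$, and correspondingly the function induced by $F$ on the dense open union of the clopen sets of $K$ associated to the $A_n$ admits no continuous $\X$-valued extension to $K$. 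The paper itself points out this phenomenon at the end of Section 3: $C(\beta\N\setminus\N,\X)$ is isometric to $\ell^{\infty}(\X)\mod c_0(\X)$ only when $\X$ is finite-dimensional. So Theorem \ref{thm: CLX} cannot be applied, and $p=\infty$ requires a direct argument inside the Bochner space. This is what the paper does: it approximates $G$ by a countably-valued function and uses fibrewise rotations (one rotation of $\X$ per piece, but only $N_k$ operators in all) to pass from the hub $\chi_{[0,1]}x$ to $G$; and to pass from $F$ to the hub it uses measure-algebra automorphisms $g_n,\hat g_n$ of $([0,1],m)$ that inflate a single near-norm-attaining atom $A_{n_0}$ of $F$ onto the complement of a small interval $\Delta_n$, averaging over $k$ such maps composed with rotations of $\X$. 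Your instinct that ``no magnitude amplification is needed'' at $p=\infty$ is right, but it has to be implemented by such measure-algebra maps in $L^{\infty}(\X)$ rather than by passing to $C^{\K}(K,\X)$.
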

We will make some preparations before giving the proof. Suppose that $(A_{n})_{n\in\N}$ is a countable measurable
partition of the unit interval and $(x_{n})_{n\in\N}\subset\X$. We will use the short-hand notation
$F=\sum_{n}\chi_{A_{n}}x_{n}$ for the function $F\in L^{\infty}(\X)$ defined by $F(t)=x_{n}$ for a.e. $t\in A_{n}$ for each $n\in\N$.
The following two auxiliary observations are obtained immediately from the fact that the countably valued functions are dense
in $L^{\infty}(\X)$ and the triangle inequality, respectively.
\begin{fact}\label{fact1}
Consider $F=\sum_{n}\chi_{A_{n}}x_{n}$, where $(A_{n})$ is a measurable partition of $[0,1]$
and $(x_{n})\subset \B_{\X}$. Functions $F$ of such type are dense in $\B_{L^{\infty}(\X)}$.
\end{fact}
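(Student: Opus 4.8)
The plan is to reduce the statement to the standard fact that countably-valued strongly measurable functions are dense in $L^{\infty}(\X)$, and then to refine such an approximation so that its values actually lie in $\B_{\X}$. The input I would invoke is the Pettis measurability theorem (see \cite{DU}): every $g\in L^{\infty}(\X)$ is strongly measurable, hence essentially separably valued. Since moreover $\|g(t)\|_{\X}\leq 1$ for a.e.\ $t$ when $g\in\B_{L^{\infty}(\X)}$, there is a null set outside of which $g$ takes values in a separable subset of the closed ball $\B_{\X}$; passing to the closure, I may fix a separable closed set $S\subseteq\B_{\X}$ containing a.e.\ value of $g$.

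Fix $\varepsilon>0$ and choose a countable set $\{x_{n}\}_{n}\subseteq S\subseteq\B_{\X}$ dense in $S$. Then the open $\varepsilon$-balls $B(x_{n},\varepsilon)$ cover $S$, so a.e.\ value of $g$ lies in one of them. I would build a measurable partition by setting $A_{1}=g^{-1}(B(x_{1},\varepsilon))$ and, inductively, $A_{n}=g^{-1}(B(x_{n},\varepsilon))\setminus\bigcup_{j<n}A_{j}$, absorbing into $A_{1}$ the null set on which $g$ leaves $S$; then $(A_{n})_{n}$ is a measurable partition of $[0,1]$ with $\|g(t)-x_{n}\|_{\X}<\varepsilon$ for a.e.\ $t\in A_{n}$. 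Setting $F=\sum_{n}\chi_{A_{n}}x_{n}$ produces a function of exactly the type in the statement, with all $x_{n}\in\B_{\X}$, and $\|g-F\|_{L^{\infty}(\X)}\leq\varepsilon$. As $\varepsilon>0$ and $g\in\B_{L^{\infty}(\X)}$ are arbitrary, density follows.

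This argument is essentially routine, as the phrasing ``obtained immediately'' in the text anticipates; the only point needing care — and the reason I draw the centres from a dense subset of $S\subseteq\B_{\X}$ rather than from an arbitrary $\varepsilon$-net of the range — is to guarantee that the approximating values $x_{n}$ themselves lie in the closed unit ball, so that $F$ has the precise form demanded and stays in $\B_{L^{\infty}(\X)}$. Had I used an arbitrary net I would instead have to post-compose with the radial retraction onto $\B_{\X}$, which is only $2$-Lipschitz in a general Banach space and would degrade the clean estimate; drawing the net from $S$ avoids this entirely.
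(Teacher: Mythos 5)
Your proof is correct, and it follows essentially the same route as the paper: the paper simply cites the standard fact that countably valued functions are dense in $L^{\infty}(\X)$ (which is proved exactly by your Pettis-plus-dense-centres argument) and calls the Fact immediate. The one point where you go beyond the paper's terse justification is worthwhile: by drawing the centres $x_{n}$ from the closed essential range inside $\B_{\X}$ you guarantee the approximant itself lies in $\B_{L^{\infty}(\X)}$, a normalization issue the paper leaves implicit (it could alternatively be handled by rescaling a generic countably valued approximant by $(1+\varepsilon)^{-1}$).
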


\begin{fact}\label{fact2}
Let $\X$ be a Banach space and $T_{1},...,T_{n}\in \mathcal{G}_{\X},\ n\in \N$. Assume that
$x,y,z\in \X$ satisfy $||y-\frac{1}{n}\sum_{i}T_{i}(x)||=\varepsilon\geq 0$ and
$||x-z||=\delta\geq 0$. Then $||y-\frac{1}{n}\sum_{i}T_{i}(z)||\leq \varepsilon +\delta$.
\end{fact}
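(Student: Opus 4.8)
The plan is to use nothing more than the triangle inequality together with the two defining properties of a rotation: linearity and norm-preservation. First I would introduce the common anchor point $\frac{1}{n}\sum_{i}T_{i}(x)$, which already appears in the first hypothesis, and split the quantity to be estimated as
\begin{equation*}
\left\|y-\frac{1}{n}\sum_{i}T_{i}(z)\right\|\leq \left\|y-\frac{1}{n}\sum_{i}T_{i}(x)\right\|+\left\|\frac{1}{n}\sum_{i}T_{i}(x)-\frac{1}{n}\sum_{i}T_{i}(z)\right\|.
\end{equation*}
By hypothesis the first summand equals $\varepsilon$, so it remains only to bound the second by $\delta$.

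For the second summand I would exploit that each $T_{i}\in\mathcal{G}_{\X}$ is a linear isometric automorphism of $\X$. Linearity lets me combine the two averages and factor the common scalar, giving $\frac{1}{n}\sum_{i}T_{i}(x)-\frac{1}{n}\sum_{i}T_{i}(z)=\frac{1}{n}\sum_{i}T_{i}(x-z)$. Applying the triangle inequality across the sum and then the isometry property $\|T_{i}(x-z)\|=\|x-z\|$ for each $i$, I obtain
\begin{equation*}
\left\|\frac{1}{n}\sum_{i}T_{i}(x-z)\right\|\leq \frac{1}{n}\sum_{i}\|T_{i}(x-z)\|=\frac{1}{n}\cdot n\cdot\|x-z\|=\delta.
\end{equation*}
Combining the two estimates yields $\|y-\frac{1}{n}\sum_{i}T_{i}(z)\|\leq\varepsilon+\delta$, which is the assertion.

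There is no genuine obstacle here: the statement is a routine stability remark recording that replacing $x$ by a nearby $z$ inside a fixed average of rotations perturbs the output by at most $\|x-z\|$, uniformly in the choice and the number of the rotations. The only features used are that the maps $T_{i}$ belong to $\mathcal{G}_{\X}$, hence are linear and norm-preserving, and that an average with weights summing to $1$ is $1$-Lipschitz in each argument. I would therefore present it in just the two displayed lines, since its sole role is to be invoked silently in the perturbation arguments of the later theorems.
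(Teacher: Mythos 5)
Your proof is correct and is precisely the triangle-inequality argument the paper has in mind: the paper states Fact \ref{fact2} without a written proof, remarking only that it follows immediately from the triangle inequality, and your two-step decomposition through the anchor $\frac{1}{n}\sum_{i}T_{i}(x)$, using linearity and norm-preservation of the $T_{i}$, is exactly that omitted routine argument. No gap and no divergence from the paper's approach.
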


\begin{proof}[Proof of Theorem \ref{thm: LPX}]

We mainly concentrate on the case $p=\infty$. Fix $k\in \N$, $x\in \S_{\X}$,
$(x_{n}),(y_{n})\subset\B_{\X}$ and measurable partitions $(A_{n})$ and $(B_{n})$ of the unit
interval. Let $N_{k}$ be the integer provided by the uniform convex transitivity of $\X$
associated to the value $\varepsilon=\frac{1}{k}$. Write
\[F=\sum_{n}\chi_{A_{n}}x_{n}\ \mathrm{and}\ G=\sum_{n}\chi_{B_{n}}y_{n}.\]
We assume additionally that $||F||=1$.

For each $n\in \N$ there are isometries $\{T_{i}^{(n)}\}_{i\leq N_{k}}\subset\mathcal{G}_{\X}$ such that
\begin{equation}\label{eq: yj}
\begin{array}{cc}
&\left|\left|\frac{1}{N_{k}}\sum_{i=1}^{N_{k}}T_{i}^{(n)}(x)-y_{n}\right|\right|<\frac{1}{k}\quad \mathrm{for}\ n\in\N.
\end{array}
\end{equation}

Observe that one obtains rotations on $L^{\infty}(\X)$ by putting
\[R_{i}(f)(t)=\sum_{n}\chi_{B_{n}}T_{i}^{(n)}(f(t))\]
for a.e. $t\in [0,1]$, where $f\in L^{\infty}(\X)$, $i\leq N_{k}$, and the above summation is understood in the sense of
pointwise convergence almost everywhere. We define a convex combination of elements of $\mathcal{G}_{L^{\infty}(\X)}$ by
\begin{equation*}
\begin{array}{l}
\mathrm{A}_{1}(f)=\frac{1}{N_{k}}\sum_{i=1}^{N_{k}}R_{i}(f),\quad f\in L^{\infty}(\X).
\end{array}
\end{equation*}
Condition \eqref{eq: yj} implies that
\begin{equation}\label{eq: GCC}
||G-\mathrm{A}_{1}(\chi_{[0,1]}x)||<\frac{1}{k}.
\end{equation}

By the definition of $F$ one can find $n_{0}\in\N$ such that $m(A_{n_{0}})>0$ and
\begin{equation}\label{eq: supAFx}
||x_{n_{0}}||_{\X}>1-\frac{1}{k}.
\end{equation}
Put $\Delta_{n}=[1-2^{-n},1-2^{-(n+1)}]$ for $n\leq k$.
By composing suitable bijective transformations one can construct measurable mappings
$g_{n}\colon [0,1]\rightarrow [0,1]$ and $\hat{g}_{n}\colon [0,1]\rightarrow [0,1]$ such that
\begin{equation}
g_{n}(A_{n_{0}})\ae [0,1]\setminus \Delta_{n}\ \mathrm{and}\ g_{n}([0,1]\setminus A_{n_{0}})\ae \Delta_{n},
\end{equation}
\begin{equation}\label{eq: mequiv}
\mathrm{the\ measure}\ \mu_{n}(\cdot)\stackrel{\cdot}{=}m(g_{n}(\cdot))\colon\Sigma\rightarrow\R\ \mathrm{is\ equivalent\ to}\ m
\end{equation}
and
\begin{equation}\label{eq: hatg}
\hat{g}_{n}\circ g_{n}(t)=t\quad \mathrm{for}\ \mathrm{a.e.}\ t\in [0,1]
\end{equation}
for each $n\leq k$.

Next we will apply some observations which appear e.g. in \cite{Greim_Lp} and \cite{Greim_Linfty}.
Denote by $\Sigma\setminus_{m}$ the quotient $\sigma$-algebra of Lebesgue measurable sets on $[0,1]$ formed by identifying the
$m$-null sets with $\emptyset$. Note that \eqref{eq: mequiv} gives in particular that the map
$\phi_{n}\colon\Sigma\setminus_{m}\rightarrow\Sigma\setminus_{m}$ determined by $\phi_{n}(A)\ae g_{n}(A)$ for $A\in \Sigma$
is a Boolean isomorphism for each $n\leq k$. Observe that $\hat{g}_{n}(A)\ae \phi_{n}^{-1}(A)$ for $A\in\Sigma$ and $n\leq k$.

By \eqref{eq: supAFx} there are rotations $\{T_{i}\}_{i\leq N_{k}}\subset \mathcal{G}_{\X}$ such that
\begin{equation}\label{eq: xsumd}
\begin{array}{cc}
&\left|\left|x-\frac{1}{N_{k}}\sum_{i=1}^{N_{k}}T_{i}(x_{n_{0}})\right|\right|_{\X}<\frac{2}{k}.
\end{array}
\end{equation}

According to \eqref{eq: hatg} we may define mappings $S_{i}\colon L^{\infty}(\X)\rightarrow L^{\infty}(\X)$ for
$n\leq k$ and $i\leq N_{k}$ by putting
\[S_{i}^{(n)}(F)(t)=T_{i}(F(\hat{g}_{n}(t)))\quad \mathrm{for\ a.e.}\ t\in[0,1],\ F\in L^{\infty}(\X).\]
By \eqref{eq: mequiv} we get that $S_{i}^{(n)}\in\mathcal{G}_{L^{\infty}(\X)}$ (see also \cite[p.467-468]{Greim_Linfty}).

The function $\chi_{[0,1]}x$ can be approximated by convex combinations as follows:
\begin{equation}\label{eq: conclusion}
\begin{array}{cc}
&\left|\left|\chi_{[0,1]}x-\frac{1}{k}\sum_{n=1}^{k}\frac{1}{N_{k}}\sum_{i=1}^{N_{k}}S_{i}^{(n)}(F)\right|\right|_{L^{\infty}(\X)}
\leq \frac{1}{k}(2+\sum_{i=1}^{k-1}2k^{-1}).
\end{array}
\end{equation}
Indeed, for $n\leq k$ and a.e. $t\in [0,1]\setminus \Delta_{n}$ it holds by \eqref{eq: xsumd} that
\begin{equation*}
\begin{array}{cc}
&\left|\left|x-\frac{1}{N_{k}}\sum_{i=1}^{N_{k}}S_{i}^{(n)}(F)(t)\right|\right|_{\X}= \left|\left|x-\frac{1}{N_{k}}\sum_{i=1}^{N_{k}}T_{i}^{(n)}(x_{n})\right|\right|_{\X}\leq \frac{2}{k}.
\end{array}
\end{equation*}
On the other hand, $||x-\frac{1}{N_{k}}\sum_{i=1}^{N_{k}}S_{i}^{(n)}(F)(t)||_{\X}\leq 2$ for a.e. $t\in \Delta_{n}$.
In \eqref{eq: conclusion} we apply the fact that $\Delta_{n}$ are pairwise essentially disjoint.

Denote $\mathrm{A}_{2}=\frac{1}{k}\sum_{n=1}^{k}\frac{1}{N_{k}}\sum_{i=1}^{N_{k}}S_{i}^{(n)}\in \conv(\mathcal{G}_{L^{\infty}(\X)})$.
By combining the estimates \eqref{eq: GCC} and \eqref{eq: conclusion} we obtain by Fact \ref{fact2} that
\[||G-\mathrm{A}_{1}\mathrm{A}_{2}(F)||<\frac{5}{k}.\]
Observe that $\mathrm{A}_{1}\mathrm{A}_{2}$ is an average of $N_{k}N_{k}$ many rotations on $L^{\infty}(\X)$.
We conclude by Fact \ref{fact1} that $L^{\infty}(\X)$ is uniformly convex-transitive.

The case $1\leq p<\infty$ is a straightforward modification of the proof of \cite[Thm. 2.1]{GJK}, where one replaces
$U_{i}x_{i}$ by suitable averages belonging to $\conv(\mathcal{G}_{\X}(x_{i}))$ for each $i$.
\end{proof}

In fact it is not difficult to check the following fact: If the rotations of $L^{\infty}(\X)$ are of the Banach-Stone type, then
$L^{\infty}(\X)$ is convex-transitive if and only if each $x\in \S_{\X}$ is a uniformly big point.

We already mentioned that $\ell^{\infty}\mod c_0$ is uniformly convex-transitive as a real space.
Next we generalize this result to the vector-valued setting.
\begin{theorem}
Let $\X$ be a uniformly convex-transitive Banach space over $\K$. Then $\ell^{\infty}(\X)\mod
c_0(\X)$ (over $\K$) is uniformly convex-transitive.
\end{theorem}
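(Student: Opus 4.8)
The plan is to follow the proof of Theorem \ref{thm: LPX} for $p=\infty$ almost verbatim, replacing the Bochner space by sequences and the measure algebra $\Sigma\setminus_m$ by the Boolean algebra of subsets of $\N$ modulo finite sets. Throughout I use that the norm of a class $[(x_n)]$ in $\ell^{\infty}(\X)\mod c_0(\X)$ equals $\limsup_n\|x_n\|_{\X}$, and that two families of rotations descend from $\ell^{\infty}(\X)$ to the quotient: the \emph{coordinatewise} rotations $(x_n)\mapsto(T_nx_n)$ with $T_n\in\mathcal{G}_{\X}$ arbitrary, and the \emph{permutation} rotations $(x_n)\mapsto(x_{\pi(n)})$ with $\pi$ a bijection of $\N$ (both preserve $c_0(\X)$, so descend). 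Since every element of the quotient is already represented by a bounded sequence, no density reduction as in Fact \ref{fact1} is needed. Fix $k\in\N$ and let $N_k$ be the constant furnished by the uniform convex-transitivity of $\X$ for $\varepsilon=1/k$. Fix a class $F=[(x_n)]$ of norm $1$ and a target $G=[(y_n)]$ of norm $\le1$; after an arbitrarily small perturbation (absorbed into the final error) I choose representatives with $\sup_n\|x_n\|\le1$ and $\sup_n\|y_n\|\le1$, and I fix once and for all a vector $x\in\S_{\X}$.

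First I would reach $G$ from the constant class $\chi_{\N}x\stackrel{\cdot}{=}[(x,x,\ldots)]$, which is the analogue of \eqref{eq: GCC}. For each $n$ the uniform convex-transitivity of $\X$ yields $T_1^{(n)},\dots,T_{N_k}^{(n)}\in\mathcal{G}_{\X}$ with $\|y_n-\frac{1}{N_k}\sum_{i=1}^{N_k}T_i^{(n)}(x)\|<1/k$; letting $R_i$ be the coordinatewise rotation whose $n$-th entry is $T_i^{(n)}$, the average $\mathrm{A}_1=\frac{1}{N_k}\sum_{i=1}^{N_k}R_i$ satisfies $\|G-\mathrm{A}_1(\chi_{\N}x)\|\le 1/k$, the supremum being controlled coordinate by coordinate.

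The crux is the analogue of \eqref{eq: conclusion}: manufacturing the norm-one constant $\chi_{\N}x$ out of the arbitrary norm-one class $F$ by an average of rotations, and this is the step I expect to be the main obstacle. The argument of Theorem \ref{thm: LPX} cannot be copied directly: since isometries preserve norms, a coordinatewise rotation can never enlarge $\|x_n\|$, so the (possibly infinitely many) coordinates on which $\|x_n\|$ is far from $1$ must be eliminated by rearrangement; moreover, unlike in Theorem \ref{thm: LPX} where spreading a single good set produced a constant, the good values $x_n$ need not cluster around any single vector (think of an orthonormal sequence), so they must be steered to the common target $x$ \emph{individually}. I would resolve this as follows. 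Choose an infinite, co-infinite set $M\subset\N$ with $\|x_n\|>1-1/k$ for $n\in M$, partition $\N$ into infinite sets $E_1,\dots,E_k$, and pick bijections $\pi_n$ of $\N$ with $\pi_n(E_n)=\N\setminus M$ and $\pi_n(\N\setminus E_n)\subset M$. For each good index $j\in M$ let $U_1^{(j)},\dots,U_{N_k}^{(j)}\in\mathcal{G}_{\X}$ satisfy $\|x-\frac{1}{N_k}\sum_iU_i^{(j)}(x_j/\|x_j\|)\|<1/k$, and let $R_i^{(n)}$ be the coordinatewise rotation whose $m$-th entry is $U_i^{(\pi_n(m))}$ when $m\notin E_n$ and the identity otherwise. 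Writing $P_n$ for the permutation rotation induced by $\pi_n$ and $\Phi_n=\frac{1}{N_k}\sum_iR_i^{(n)}P_n$, a short estimate (using $1-1/k<\|x_j\|\le1$) gives $\|(\Phi_nF)(m)-x\|\le 2/k$ for every $m\notin E_n$, while $\|(\Phi_nF)(m)\|\le1$ always. Since the $E_n$ partition $\N$, each coordinate $m$ is \emph{bad} for exactly one $n$, so averaging over $n$ yields $\|\chi_{\N}x-\mathrm{A}_2(F)\|\le4/k$ for $\mathrm{A}_2=\frac{1}{k}\sum_{n=1}^{k}\Phi_n$. Here the partition $E_1,\dots,E_k$ plays exactly the role of the disjoint sets $\Delta_n$ in \eqref{eq: conclusion}: the bad sets need not be negligible, only disjoint and few.

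Finally I would compose the two averages. By Fact \ref{fact2} applied to $\mathrm{A}_1$, one has $\|G-\mathrm{A}_1\mathrm{A}_2(F)\|\le\|G-\mathrm{A}_1(\chi_{\N}x)\|+\|\chi_{\N}x-\mathrm{A}_2(F)\|\le5/k$, and $\mathrm{A}_1\mathrm{A}_2$ is an average of $N_k\cdot kN_k=kN_k^2$ rotations of $\ell^{\infty}(\X)\mod c_0(\X)$, a number depending only on $k$ and hence only on $\varepsilon$ and $\X$. Letting $k\to\infty$ then shows that $\ell^{\infty}(\X)\mod c_0(\X)$ is uniformly convex-transitive. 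The one genuinely new ingredient compared with Theorem \ref{thm: LPX} is the spreading step, where position-dependent coordinatewise rotations replace the single rotation used there, together with the $k$-fold partition of $\N$ absorbing the defect left by the non-surjectivity of the rearrangements.
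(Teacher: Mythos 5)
Your proposal is correct and takes essentially the same route as the paper's own proof: both arguments pass from the given class to a constant sequence via an average of rotations built from permutations of $\N$ composed with coordinatewise isometries, arranged so that the ``bad'' output coordinates of the different terms are pairwise disjoint, and then reach the target by a purely coordinatewise average, descending everything to the quotient at the end. The only difference is bookkeeping in the spreading step: the paper pushes the bad index set into disjoint sets of prime powers using $j_{(k)}$ permutations, whereas you prescribe the disjoint bad sets in advance as a partition $E_1,\dots,E_k$ of $\N$; the estimates and the final count of rotations are the same in substance.
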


\begin{proof}
Observe that the formula
\begin{equation}\label{eq: Tsum}
T((x_{n})_{n})=(S_{n}x_{\pi(n)})_{n},
\end{equation}
where $\pi\colon \N\rightarrow\N$ is a bijection and $S_{n}\in\mathcal{G}_{\X},\ n\in \N$, defines
a rotation on $\ell^{\infty}(\X)$. Also note that such an isometry $T$ restricted to $c_0(\X)$ is
a member of $\mathcal{G}_{c_0(\X)}$.

If $T\in\mathcal{G}_{\ell^{\infty}(\X)}$ is as in \eqref{eq: Tsum}, then $\widehat{T}\colon
x+c_0(\X)\mapsto T(x)+c_0(\X)$, for $x\in\ell^{\infty}(\X)$, defines a rotation
$\ell^{\infty}(\X)\mod c_0(\X)\rightarrow \ell^{\infty}(\X)\mod c_0(\X)$. Indeed, it is clear that
$\widehat{T}\colon \ell^{\infty}(\X)\mod c_0(\X)\rightarrow \ell^{\infty}(\X)\mod c_0(\X)$ is a
linear bijection. Moreover,
\[\inf_{z\in c_0(\X)}||x-z||=\inf_{z\in c_0(\X)}||T(x)-T(z)||=\inf_{z\in c_0(\X)}||T(x)-z||,\]
so that $\widehat{T}\colon \ell^{\infty}(\X)\mod c_0(\X)\rightarrow \ell^{\infty}(\X)\mod c_0(\X)$
is an isometry.

Fix $u,v\in \S_{\ell^{\infty}(\X)\mod c_0(\X)}$. If $x,y\in \ell^{\infty}(\X)$ are such that
$u=x+c_0(\X)$ and $v=y+c_0(\X)$, then
\begin{equation}\label{eq: distsup}
\dist(x,c_0(\X))=\limsup_{n\rightarrow\infty}||x_{n}||=1=\dist(y,c_0(\X))=\limsup_{n\rightarrow\infty}||y_{n}||,
\end{equation}
since $u,v\in \S_{\ell^{\infty}(\X)\mod c_0(\X)}$. Hence we may pick $x,y\in
\S_{\ell^{\infty}(\X)}$ such that $u=x+c_0(\X)$ and $v=y+c_0(\X)$.

Fix $k\in\N$, $e\in \S_{\X}$ and let $A=\{n\in \N:\ ||x_{n}||\geq 1-\frac{1}{2k}\}$. Observe that $A$ is an infinite
set by \eqref{eq: distsup}. Since $\X$ is uniformly convex-transitive, there exists $N_{(k)}\in\N$ such that
for each $n\in A$ there are $T_{1}^{(n)},\ldots,T_{N_{(k)}}^{(n)}\in \mathcal{G}_{\X}$ such that
\begin{equation}\label{eq: e}
\begin{array}{l}
\left|\left|e-\frac{1}{N_{(k)}}\sum_{l=1}^{N_{(k)}}T_{l}^{(n)}x_{n}\right|\right|<\frac{1}{k}.
\end{array}
\end{equation}

Fix $j_{(k)}\in \N$ such that
\begin{equation}\label{eq: jk}
\frac{1}{j_{(k)}}(2+(j_{(k)}-1)(\frac{1}{k}))<\frac{2}{k}.
\end{equation}
Denote by $p_{1},\ldots,p_{j_{(k)}}\in \N$ the $j_{(k)}$ first primes. Let
$\phi_{1},\ldots, \phi_{j_{(k)}}\colon \N\rightarrow \N$ be permutations such that
\begin{equation}\label{eq: phiNA}
\phi_{i}(\N\setminus A)\subset \{p_{i}^{m}|\ m\in \N\}\quad \mathrm{for}\ i\in \{1,\ldots,j_{(k)}\}.
\end{equation}
For $l\in \{1,\ldots,N_{(k)}\}$ put $S_{i,n,l}=T_{l}^{(\phi^{-1}_{i}(n))}$ if $\phi^{-1}_{i}(n)\in A$
and otherwise put $S_{i,n,l}=\I$. Define a convex combination of rotations on $\ell^{\infty}(\X)$ by letting
\begin{equation*}
\begin{array}{l}
\mathrm{A}_{1}(z)|_{n}=\frac{1}{j_{(k)}}\sum_{i=1}^{j_{(k)}}\frac{1}{N_{(k)}}\sum_{l=1}^{N_{(k)}}S_{i,n,l}(z_{\phi^{-1}_{i}(n)}),
\end{array}
\end{equation*}
where $(z_{n})_{n\in\N}\in \ell^{\infty}(\X)$. Consider $\mathrm{A}_{1}\in L(\ell^{\infty}(\X))$ and
$\overline{e}=(e,e,e,\ldots)\in \ell^{\infty}(\X)$. We obtain that
\begin{equation}\label{eq: eee}
||\overline{e}-\mathrm{A}_{1}((x_{n}))||_{\ell^{\infty}(\X)}<\frac{2}{k}.
\end{equation}
Indeed, for each $n\in \N$ it holds for at least $j_{(k)}-1$ many indices $i$ that
\begin{equation*}
\begin{array}{l}
\frac{1}{N_{(k)}}\sum_{l=1}^{N_{(k)}}S_{i,n,l}(x_{\phi^{-1}_{i}(n)})=\frac{1}{N_{(k)}}\sum_{l=1}^{N_{(k)}}T_{l}^{(\phi_{i}^{-1}(n))}(x_{\phi_{i}^{-1}(n)}),
\end{array}
\end{equation*}
where one uses the definition of $S_{i,n,l}$, \eqref{eq: phiNA} and the fact that the sets
$\{p_{i}^{m}|\ m\in \N\}, \{p_{j}^{m}|\ m\in\N\}$ are mutually disjoint for $i\neq j$.
Thus \eqref{eq: e} and \eqref{eq: jk} yield that
\begin{equation*}
\begin{array}{l}
\left|\left|e-\frac{1}{j_{(k)}}\sum_{i=1}^{j_{(k)}}\frac{1}{N_{(k)}}\sum_{l=1}^{N_{(k)}}S_{i,n,l}(x_{\phi^{-1}_{i}(n)})\right|\right|< \frac{2}{k}
\end{array}
\end{equation*}
holds for all $n\in \N$.

Next we will define another convex combination $\mathrm{A}_{2}$ of rotations on $\ell^{\infty}(\X)$ as follows.
By using again the uniform convex transitivity of $\X$ we obtain $T_{n,l}\in \mathcal{G}_{\X},\ 1\leq l\leq N_{(k)},\ n\in \N,$
such that
\begin{equation*}
\begin{array}{l}
\left|\left|y_{n}-\frac{1}{N_{(k)}}\sum_{l=1}^{N_{(k)}}T_{n,l}e\right|\right|<\frac{1}{k}
\end{array}
\end{equation*}
holds for $n\in\N$. Define
\begin{equation*}
\begin{array}{l}
\mathrm{A}_{2}(z)|_{n}=\frac{1}{N_{(k)}}\sum_{l=1}^{N_{(k)}}T_{n,l}z_{n}.
\end{array}
\end{equation*}
Combining the convex combinations yields
\[||y-\mathrm{A}_{2}\mathrm{A}_{1}x||_{\ell^{\infty}(\X)}<\frac{3}{k}\]
according to Fact \ref{fact2}. Since the applied rotations induce rotations on
$\ell^{\infty}(\X)\mod c_0(\X)$, we may consider the corresponding convex combinations in
$L(\ell^{\infty}(\X)\mod c_0(\X))$ and thus
\[||v-\widehat{\mathrm{A}_{2}\mathrm{A}_{1}}u||_{\ell^{\infty}(\X)\mod c_0(\X)}<\frac{3}{k}.\]
Tracking the formation of the convex combinations reveals that
$\widehat{\mathrm{A}_{2}\mathrm{A}_{1}}$ can be written as an average of $N_{(k)}j_{(k)}N_{(k)}$
many rotations on $\ell^{\infty}(\X)\mod c_0(\X)$.
\end{proof}

Since $C(\beta\N\setminus \N)$ is linearly isometric to $\ell^{\infty}\mod c_0$, an application
of Theorem \ref{thm: CLX} yields that $C(\beta\N\setminus\N,\X)$ is uniformly
convex-transitive if $\X$ is uniformly convex-transitive. However, let us recall that this space is linearly isometric to
$\ell^{\infty}(\X)\mod c_0(\X)$ if and only if $\X$ is finite-dimensional.

\section{Roughness and projections}

Let $\X$ be a Banach space. For each $x \in \S_{\X}$ we denote
\[\eta(\X,x)\stackrel{\cdot}{=}\limsup_{\|h\|\to 0} \frac{\|x+h\|+\|x-h\|-2}{\|h\|}.\]
Given $\varepsilon > 0$, the space $\X$ is said to be
\emph{$\varepsilon$-rough} if $\displaystyle \inf_{x\in \S_{\X}} \eta(\X,x) \geq \varepsilon$. In
addition, $2$-rough spaces are usually called \emph{extremely rough}.

We will denote the \emph{coprojection constant} of $\X$ by
\[\rho(\X)=\sup_{P}||\I-P||,\]
where the supremum is taken over all linear norm-$1$ projections $P\colon \X\rightarrow \Y$.

A Banach space $\X$ is called \emph{uniformly non-square} if there exists $a\in (0,1)$ such that
if $x,y \in \B_{\X}$ and $\|x-y\| \geq 2a$ then $\|x-y\| < 2a$. These spaces were introduced in
\cite{James} by R. C. James, who also proved that this property lies strictly between uniform
convexity and reflexivity. Next we will illustrate how the previous concepts are related.

\begin{theorem}\label{thm: rough}
Let $\X$ be a Banach space. Then the following conditions are equivalent:
\begin{enumerate}
\item[(1)]{$\X$ contains $\ell^{1}(2)$ almost isometrically.}
\item[(2)]{$\X$ is not uniformly non-square.}
\item[(3)]{$\rho(\X)=2$.}
\end{enumerate}
Moreover, if $\sup_{x\in\S_{\X}}\eta(\X,x)=2$, then $\rho(\X)=2$.
\end{theorem}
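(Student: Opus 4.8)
The plan is to establish the equivalence of $(1)$--$(3)$ through the short cycle $(1)\Leftrightarrow(2)$, $(3)\Rightarrow(2)$, $(2)\Rightarrow(3)$, and then to deduce the final assertion by passing to the dual. Throughout I use that \emph{not} being uniformly non-square means precisely that there are $x_n,y_n\in\S_{\X}$ with $\|x_n+y_n\|\to 2$ and $\|x_n-y_n\|\to 2$ (a ``near-square''), which is the reading of the stated definition.

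\emph{The equivalence $(1)\Leftrightarrow(2)$.} For $(2)\Rightarrow(1)$, given a near-square $x,y$ I would take $f\in\S_{\X^{\ast}}$ norming $x+y$ and $g\in\S_{\X^{\ast}}$ norming $x-y$; since $f(x+y)\ge 2-\delta$ forces $f(x),f(y)\ge 1-\delta$, and similarly $g(x)\ge 1-\delta$, $g(-y)\ge 1-\delta$, testing $ax+by$ against $f$ (for $a,b\ge 0$) or against $g$ (for opposite signs) yields $(1-\delta)(\abs{a}+\abs{b})\le\|ax+by\|\le\abs{a}+\abs{b}$. Rescaling the map $(a,b)\mapsto(ax+by)/(1-\delta)$ then exhibits $\ell^{1}(2)$ almost isometrically. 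The converse $(1)\Rightarrow(2)$ is immediate: normalizing the images of the standard unit vectors of $\ell^{1}(2)$ produces a near-square.

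\emph{The easy direction $(3)\Rightarrow(2)$.} Suppose $\rho(\X)=2$, so there are norm-$1$ projections $P_n$ and $z_n\in\S_{\X}$ with $\|z_n-P_nz_n\|\to 2$. Put $w_n=P_nz_n$, so $\|w_n\|\le 1$; from $\|z_n-w_n\|\le 1+\|w_n\|\to 2$ I get $\|w_n\|\to 1$. The crucial observation is the projection identity $P_n(z_n+w_n)=w_n+w_n=2w_n$, which gives $\|z_n+w_n\|\ge\|P_n(z_n+w_n)\|=2\|w_n\|\to 2$, while $\|z_n-w_n\|\to 2$ by hypothesis. Normalizing $w_n$ produces a near-square, i.e.\ $(2)$.

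\emph{The hard direction $(2)\Rightarrow(3)$.} Here I mimic the exact model in $\ell^{1}(2)$: with range $\span\{e_1\}$, defining functional $(a,b)\mapsto a-b$, and witness $e_2$ (so $Pe_2=-e_1$, giving $\|\I-P\|=2$). Given a near-square $x_n,y_n$, define $g_0$ on $E=\span\{x_n,y_n\}$ by $g_0(x_n)=1$, $g_0(y_n)=-1$; the estimate of the previous paragraph gives $\|g_0\|_{E^{\ast}}\le(1-\delta_n)^{-1}$. Extending $g_0$ by Hahn--Banach to $g$ and setting $Pz=g(z)x_n$ yields $\|\I-P\|\ge\|y_n+x_n\|\to 2$, but with $\|P\|=\|g\|\le(1-\delta_n)^{-1}$, \emph{slightly above} $1$. \textbf{This is the main obstacle:} a genuine norm-$1$ projection onto a line needs a unit functional taking the exact value $1$ on the range vector, whereas the near-square only supplies such norming approximately. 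I would resolve this with the Bishop--Phelps--Bollob\'as theorem: the almost-norming pair $(g/\|g\|,x_n)$ lies near an \emph{exact} pair $(\tilde f_n,\tilde r_n)$ with $\tilde f_n(\tilde r_n)=1$; then $\tilde P_nz=\tilde f_n(z)\tilde r_n$ is exactly norm-$1$, and since $\tilde f_n$ is close to $g/\|g\|$ one still has $\tilde P_n y_n\approx -x_n$, whence $\|\I-\tilde P_n\|\to 2$ and $\rho(\X)=2$.

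\emph{The final assertion.} Assuming $\sup_{x\in\S_{\X}}\eta(\X,x)=2$, I pick $x_m\in\S_{\X}$ and increments $h_m$ with $\|h_m\|\to 0$ realizing the ratio $\to 2$; since each of $\|x_m\pm h_m\|$ is at most $1+\|h_m\|$, the limsup being maximal forces $\|x_m+h_m\|\approx\|x_m-h_m\|\approx 1+\|h_m\|$. Taking $f_m$ norming $x_m+h_m$ and $g_m$ norming $x_m-h_m$, and writing $e_m=h_m/\|h_m\|$, I obtain $f_m(e_m)\to 1$, $g_m(e_m)\to -1$ and $f_m(x_m),g_m(x_m)\to 1$, so that $\|f_m-g_m\|\ge(f_m-g_m)(e_m)\to 2$ and $\|f_m+g_m\|\ge(f_m+g_m)(x_m)\to 2$. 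Thus $\X^{\ast}$ contains a near-square and is not uniformly non-square; by the self-duality of uniform non-squareness neither is $\X$, so $(2)$ holds and the already-proved implication $(2)\Rightarrow(3)$ gives $\rho(\X)=2$. The only genuinely non-elementary inputs I anticipate needing are Bishop--Phelps--Bollob\'as for the exactness issue and the self-duality of uniform non-squareness for this last step, both classical.
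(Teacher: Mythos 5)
Your proposal is correct, but it routes the equivalences differently from the paper, and the comparison is instructive. The paper closes the cycle as $(1)\Leftrightarrow(2)$ (quoted as well known), $(1)\Rightarrow(3)$ ``by using the Hahn--Banach Theorem to obtain suitable rank-$1$ projections'' (stated without detail), and $(3)\Rightarrow(2)$ by essentially your argument: its computation $||x+y||\geq ||P(2x+P(x)-x)||-\frac{\delta}{2}=||P(2x)||-\frac{\delta}{2}$ is exactly your identity $P_n(z_n+w_n)=2w_n$ in disguise, so that direction matches. Where you genuinely diverge is in proving $(2)\Rightarrow(3)$ directly, and your diagnosis of the exactness obstacle is the right one: the rank-one map $z\mapsto g(z)x_n$ built from a near-square is a projection of norm only $\leq (1-\delta_n)^{-1}$, while the definition of $\rho(\X)$ requires norm exactly $1$; your Bishop--Phelps--Bollob\'as repair (replacing the almost-norming pair by an exact pair $(\tilde f_n,\tilde r_n)$ with $\tilde f_n(\tilde r_n)=1$, so that $\tilde P_n z=\tilde f_n(z)\tilde r_n$ has $||\tilde P_n||=1$ while $\tilde f_n(y_n)\approx -1$ and $\tilde r_n\approx x_n$ keep $||\I-\tilde P_n||\to 2$) is valid, and in fact it makes rigorous precisely the step the paper's one-line sketch glosses over, since a naive Hahn--Banach extension from an almost isometric $\ell^{1}(2)$ meets the same $(1+\varepsilon)$-norm defect and some perturbation device is needed. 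For the ``moreover'' clause the paper instead applies its Lemma \ref{lm: canonical} on one-sided derivatives of the norm to conclude failure of uniform non-squareness; your argument via norming functionals $f_m,g_m$ of $x_m\pm h_m$ reaches the same conclusion, but your detour through $\X^{\ast}$ and the self-duality of uniform non-squareness is unnecessary: the same functionals applied in the predual give $||x_m+e_m||\geq f_m(x_m+e_m)\to 2$ and $||x_m-e_m||\geq g_m(x_m-e_m)\to 2$ with $e_m=h_m/||h_m||$, i.e. a near-square in $\X$ itself, after which your own implication $(2)\Rightarrow(3)$ finishes. In sum: your route buys a fully detailed, self-contained proof whose only nonelementary inputs are Bishop--Phelps--Bollob\'as and (avoidably) self-duality, whereas the paper's route buys brevity plus the reusable derivative lemma, at the cost of leaving the exact-norming issue implicit.
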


We will require some preparations before the proof. Recall that given $x,y\in \X$ the function
$t\mapsto \frac{||x+ty||-||x||}{t}$ is monotone in $t$ and thus the limit $\lim_{t\rightarrow
0^{+}}\frac{||x+ty||-||x||}{t}$ exists and is finite.

\begin{lemma}\label{lm: canonical}
Let $\X$ be a Banach space and $x,y\in \X,\ x\neq 0$. Then
\begin{equation*}
\begin{array}{lll}
& & \phantom{\Big |}\lim_{t\rightarrow 0^{+}}\frac{||x+t(y+\theta x)||-||x||}{t}=\lim_{t\rightarrow 0^{+}}\frac{||x-t(y+\theta x)||-||x||}{t}\\
&=&\phantom{\Big |}\lim_{t\rightarrow 0^{+}}\frac{||x+t(y+\theta x)||+||x-t(y+\theta x)||-2||x||}{2t}
\end{array}
\end{equation*}
for $\theta\stackrel{\cdot}{=}\lim_{t\rightarrow 0^{+}}\frac{||x-ty||-||x+ty||}{2t||x||}$.
\end{lemma}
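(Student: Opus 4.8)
The plan is to express all three quantities in terms of the one-sided directional derivative of the norm at $x$, namely $D(v)\stackrel{\cdot}{=}\lim_{t\rightarrow 0^{+}}\frac{||x+tv||-||x||}{t}$, which exists and is finite for every $v\in\X$ by the monotonicity recalled just before the lemma. The whole statement then rests on a single algebraic property of $D$, that it is additive in the direction $x$:
\[D(v+sx)=D(v)+s||x||\qquad \mathrm{for\ every}\ v\in\X\ \mathrm{and}\ s\in\R.\]

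To establish this property --- which I expect to be the main obstacle, since everything else is formal --- I would exploit the homogeneity of the norm by rescaling. For $t>0$ small enough that $1+ts>0$ one has $||x+t(v+sx)||=(1+ts)\,||x+\tfrac{t}{1+ts}v||$. Writing $u=\frac{t}{1+ts}$ and splitting off the factor $(1+ts)$ yields
\[\frac{||x+t(v+sx)||-||x||}{t}=(1+ts)\cdot\frac{||x+uv||-||x||}{u}\cdot\frac{u}{t}+s||x||.\]
As $t\rightarrow 0^{+}$ we have $u\rightarrow 0^{+}$, $\frac{u}{t}=\frac{1}{1+ts}\rightarrow 1$ and $1+ts\rightarrow 1$, so the right-hand side converges to $D(v)+s||x||$, proving the claim.

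With this in hand the conclusion is immediate. Since both relevant one-sided limits exist, the defining expression for $\theta$ rearranges to $\theta=\frac{1}{2||x||}\big(D(-y)-D(y)\big)$, equivalently $2\theta||x||=D(-y)-D(y)$. Applying directional additivity once with $v=y,\ s=\theta$ and once with $v=-y,\ s=-\theta$ identifies the first two limits of the lemma as
\[\lim_{t\rightarrow 0^{+}}\frac{||x+t(y+\theta x)||-||x||}{t}=D(y)+\theta||x||,\qquad \lim_{t\rightarrow 0^{+}}\frac{||x-t(y+\theta x)||-||x||}{t}=D(-y)-\theta||x||.\]
Their difference is $D(y)-D(-y)+2\theta||x||$, which vanishes by the choice of $\theta$, so the two limits coincide. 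Finally, the third quantity is by definition the average of the first two --- one writes the symmetric difference quotient as $\tfrac12$ of the sum of the two one-sided quotients and passes to the limit --- so it equals their common value, which completes the proof.
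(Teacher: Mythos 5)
Your proof is correct, and it reaches the conclusion by a noticeably different decomposition than the paper's. The paper never isolates the directional-derivative functional $D$: instead it substitutes the defining difference quotient of $\theta$ back into the norm, i.e.\ it replaces $t\theta x$ by $t\,\frac{\|x-ty\|-\|x+ty\|}{2t\|x\|}\,x$ (justified by \eqref{eq: b} together with the triangle inequality), rewrites the resulting expression as $\|a(t)x+ty\|$ with the \emph{variable} scaling $a(t)=1+\frac{\|x-ty\|-\|x+ty\|}{2\|x\|}$, and then uses the rescaling fact \eqref{eq: a} to conclude that each one-sided limit equals the symmetric quantity $\lim_{t\rightarrow 0^{+}}\frac{\|x+ty\|+\|x-ty\|-2\|x\|}{2t}$. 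You instead keep $\theta$ fixed and prove the exact additivity law $D(v+sx)=D(v)+s\|x\|$ via the \emph{constant} rescaling $1+ts$; this is the same underlying homogeneity trick as \eqref{eq: a}, but because your scaling factor is constant the identity is exact, and no error-term step corresponding to \eqref{eq: b} is needed. The rest is clean algebra from $2\theta\|x\|=D(-y)-D(y)$. Your route buys modularity and avoids the somewhat delicate substitution inside the norm; what the paper's route makes explicit, and yours leaves implicit, is that the common value of the three limits is the symmetric limit formed from $y$ alone --- in your notation $\frac{D(y)+D(-y)}{2}$ --- which is the form actually exploited when the lemma is applied to roughness in Theorem \ref{thm: rough}, so it would be worth recording that identification as a one-line corollary of your computation.
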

\begin{proof}
Observe that for all maps $a\colon [0,1]\rightarrow \R$ such that $\lim_{t\rightarrow 0^{+}}a(t)>0$ it holds that
\begin{equation}
\begin{array}{lll}\label{eq: a}
& &\phantom{\Big |}\lim_{t\rightarrow 0^{+}}\frac{||a(t)x+ty||-||a(t)x||}{t}=\lim_{t\rightarrow 0^{+}}\frac{||a(t)x+\frac{a(t)}{a(t)}ty||-||a(t)x||}{t}\\
&=&\phantom{\Big |}\lim_{t\rightarrow 0^{+}}\frac{||x+\frac{t}{a(t)}y||-||x||}{\frac{t}{a(t)}}=\lim_{t\rightarrow 0^{+}}\frac{||x+ty||-||x||}{t}.
\end{array}
\end{equation}
We will also apply the fact that
\begin{equation}\label{eq: b}
\lim_{t\rightarrow 0^{+}}\frac{t(\lim_{t\rightarrow 0^{+}}
\frac{||x-ty||-||x+ty||}{2t||x||})-t\frac{||x-ty||-||x+ty||}{2t||x||}}{t}=0.
\end{equation}
The claimed one-sided limits are calculated as follows:
\begin{eqnarray*}
& &\lim_{t\rightarrow 0^{+}}\frac{||x+t(y+\theta x)||-||x||}{t}\\
&=&\lim_{t\rightarrow 0^{+}}\frac{||(1+\frac{||x-ty||-||x+ty||}{2||x||})x+ty||-||x||}{t}\\
&=&\lim_{t\rightarrow 0^{+}}\frac{||(1+\frac{||x-ty||-||x+ty||}{2||x||})x+ty||-(1+\frac{||x-ty||-||x+ty||}{2||x||})||x||}{t}\\
&+&\lim_{t\rightarrow 0^{+}}\frac{(1+\frac{||x-ty||-||x+ty||}{2||x||})||x||-||x||}{t}\\
&=&\lim_{t\rightarrow 0^{+}}\frac{||x+ty||-||x||}{t}+\lim_{t\rightarrow 0^{+}}\frac{||x-ty||-||x+ty||}{2t}\\
&=&\lim_{t\rightarrow 0^{+}}\frac{||x+ty||+||x-ty||-2||x||}{2t}.
\end{eqnarray*}
In the first equality above we applied the fact \eqref{eq: b}, and in the third equality the fact
\eqref{eq: a}. The calculations for the equation
\begin{equation*}
\lim_{t\rightarrow 0^{+}}\frac{||x-t(y+\theta x)||-||x||}{t}=\lim_{t\rightarrow 0^{+}}\frac{||x+ty||-||x-ty||-2||x||}{2t}
\end{equation*}
are similar.
\end{proof}

\begin{proof}[Proof of Theorem \ref{thm: rough}]
The equivalence of conditions (1) and (2) is well-known. The direction (1)$\implies$(3) is
established by using the Hahn-Banach Theorem to obtain suitable rank-$1$ projections $P$. 
Towards the implication (3)$\implies$(2), suppose that $\rho(\X)=2$. 
Given $\delta>0$ there exists a projection $P\colon \X\rightarrow \Y$, which satisfies $||P||=1$ and 
$||\I-P||>2-\frac{\delta}{2}$. Choose $x\in\S_{\X}$ such that $||x-P(x)||>2-\frac{\delta}{2}$. This gives that $||P(x)||\geq
1-\frac{\delta}{2}$. Put $y=\frac{P(x)}{||P(x)||}$ and note that $y\in \S_{\X}$ and
$||y-P(x)||<\frac{\delta}{2}$. Moreover,
\begin{eqnarray*}
& &||x-y||\geq ||x-P(x)||-||y-P(x)||>2-\delta>2(1-\delta)\\
\mathrm{and}& & \\
& &||x+y||\geq ||x+P(x)||-||y-P(x)||>||x+P(x)||-\frac{\delta}{2}\\
&=&||2x+P(x)-x||\ ||P||-\frac{\delta}{2}\\
&\geq &||P(2x+P(x)-x)||-\frac{\delta}{2}=||P(2x)||-\frac{\delta}{2}>2-\delta-\frac{\delta}{2}>2(1-\delta).
\end{eqnarray*}
Thus $\X$ is not uniformly non-square.

To check the latter part of the claim, an application of Lemma \ref{lm: canonical} yields that if
$\sup_{x\in \S_{\X}}\eta(x,\X)=2$, then $\X$ is not uniformly non-square. Hence $\rho(\X)=2$.
\end{proof}

The extreme roughness of $\X$ is a tremendously stronger condition than $\rho(\X)=2$. For example,
if $(F_{n})$ is a sequence of finite-dimensional smooth spaces such that $\rho(F_{n})\rightarrow
2$ as $n\rightarrow \infty$, then the space
\[\quad \quad \quad \quad \quad \quad \quad \X=\bigoplus_{n\in \N}F_{n}\quad \quad \mathrm{
(summation\ in }\ \ell^{2}\mathrm{ -sense ) }\]
is Fr\'{e}chet-smooth but $\rho(\X)=2$.

However, for convex-transitive spaces $\X$ the condition of being extremely rough is equivalent to
the condition $\rho(\X)=2$. Indeed, if a convex-transitive space is not extremely rough then,
by \cite[Thm. 6.8]{BR2}, it must be uniformly convex and thus $\rho(\X)<2$. It is unknown to us whether a convex-transitive
Banach space is reflexive if it does not contain an isomorphic copy of $\ell^1$.

In the same spirit as in this section, the projection constants of $L^p$ spaces were discussed in
\cite{asytrans}.

\section{Final Remarks: On the universality of transitivity properties}

The well-known Banach-Mazur problem mentioned in the introduction asks whether every transitive,
separable Banach space must be linearly isometric to a Hilbert space. It is well-known that all
such (transitive+separable) spaces must be smooth; otherwise, not much is known. Even adding some
properties like being a dual space or even reflexivity has not sufficed, to date, for proving that
the norm is Hilbertian.

Let us make a few remarks on the universality of some spaces of continuous functions. It is
well-known that $C(\Delta)$ contains $C([0,1])$ isometrically; hence, the former space is
universal for the property of being uniformly convex-transitive and separable. However, it is not
almost transitive.

To get a space which is universal for the property of being almost transitive and separable, just
consider the almost transitive space $X=C_0^{\C}(L)$ where $L$ is the pseudo-arc with one point
removed (\cite{Kawamura} or \cite{Rambla}). Since $[0,1]$ is a continuous image of $L$, every
separable space is isometrically contained in $X$ (complex case) or $X_{\R}$ (real case). Finally,
note that the almost transitivity of a Banach space implies that of the real underlying space.

\subsection*{Acknowledgements}
The first author was financially supported by Junta de Andaluc\'{i}a grant FQM 257 and Project MTM-2006-15546-C02-01.
The second author was financially supported by V\"{a}is\"{a}l\"{a} Foundation and Emil Aaltonen Foundation.
This paper was conceived as the second named author enjoyed the hospitality of University of C\'{a}diz in 2008.


\begin{thebibliography}{13}
\bibitem{AR}
A. Aizpuru, F. Rambla, Almost transitivity in $C_0$ spaces of Vector-valued functions,
\textit{Proc. Edinburgh Math. Soc.} {\bf 48} (2005), 513--529.
\bibitem{Ba}
S. Banach, \emph{Th\'{e}orie des Op\'{e}rations Lin\'{e}aires}, Warsaw (1932).
\bibitem{BR2}
J. Becerra, A. Rodriguez, Transitivity of the norm on Banach spaces, \textit{Extracta Math.} {\bf
17} (2002), 1--58.
\bibitem{CaTa}
Communications between F. Cabello Sanchez and J. Talponen by email in January 2009.
\bibitem{Ca?}
F. Cabello, Convex transitive norms on spaces of continuous functions, \textit{Bull. London Math.
Soc.} {\bf 37} (2005), 107--118.
\bibitem{Ca0}
F. Cabello, Regards sur le probl\`eme des rotations de Mazur (French), \textit{Extracta Math.}
{\bf 12} (1997), no. 2, 97--116.
\bibitem{DU}
J. Diestel, J. Uhl Jr, \emph{Vector measures.} Mathematical Surveys, No. 15. American Mathematical Society, Providence, R.I., 1977.
\bibitem{HHZ}
M. Fabian, P. Habala, P. Hajek, V. Montesinos Santalucia, J. Pelant, V.Zizler, \emph{Functional
Analysis and Infinite-dimensional Geometry}, CMS Books in Mathematics, Springer-Verlag 2001.
\bibitem{Greim_Linfty}
P. Greim, The centralizer of Bochner $L^{\infty}$-Spaces, \textit{Math. Ann.} {\bf 260} (1982),
463--468.
\bibitem{Greim_Lp}
P. Greim, Isometries and $L^p$-structure of separably valued Bochner $L^p$-spaces, in Measure
Theory and its Applications, Lecture Notes in Mathematics 1033, Springer-Verlag, (1983), 209--218.
\bibitem{GJK}
P. Greim, J.E. Jamison, A. Kaminska, Almost transitivity of some functions spaces,
\textit{Math. Proc. Cambridge Phil. Soc.} {\bf 116} (1994), 475-488.
\bibitem{James}
R. C. James, Uniformly non-square Banach spaces. \textit{Ann. of Math.} {\bf 80} (1964), no. 2,
542--550.
\bibitem{Kawamura}
K. Kawamura, On a conjecture of Wood, \textit{Glasg. Math. J.}, {\bf 47} (2005), 1-5.
\bibitem{Lac}
H.E. Lacey, \emph{The isometric theory of classical Banach spaces.} Die Grundlehren der mathematischen Wissenschaften, Band 208.
Springer-Verlag, New York-Heidelberg, 1974.
\bibitem{PR}
A. Pelczynski, S. Rolewicz, Best norms with respect to isometry groups in normed linear spaces, Short Communications
on International Math. Conference in Stockholm (1962), 104.
\bibitem{Rambla}
F. Rambla, A counterexample to Wood's conjecture, \textit{J. Math. Anal. Appl.} {\bf 317} (2006),
659--667.
\bibitem{Rol}
S. Rolewicz, \emph{Metric Linear Spaces}, Polish Scientific Publishers, Reidel, 1985.
\bibitem{asytrans}
J. Talponen, Asymptotically transitive Banach spaces, Banach Spaces and their Applications in
Analysis, Eds. B.Randrianatoanina, N. Randrianantoanina, de Gruyter Proceedings in Mathematics
(2007), 423--438.
\bibitem{conv}
J. Talponen, Convex-transitivity in function spaces, J. Math. Anal. Appl. (to appear).
\bibitem{Wo}
G. Wood, Maximal symmetry in Banach spaces, \textit{Proc. Roy. Irish Acad.} {\bf 82A} (1982),
177--186.
\end{thebibliography}
\end{document}